\theoremstyle{plain}
   \newtheorem{theorem}{Theorem}[section]
   \newtheorem{lemma}[theorem]{Lemma}
   \newtheorem{corollary}[theorem]{Corollary}
\theoremstyle{definition}
   \newtheorem{example}[theorem]{Example}
   \newtheorem{remark}[theorem]{Remark}
\numberwithin{equation}{section}
\DeclareMathOperator{\diag}{diag}
\DeclareMathOperator{\dec}{dec}
\DeclareFontFamily{U} {MnSymbolA}{}
\DeclareFontShape{U}{MnSymbolA}{m}{n}{
  <-6> MnSymbolA5
  <6-7> MnSymbolA6
  <7-8> MnSymbolA7
  <8-9> MnSymbolA8
  <9-10> MnSymbolA9
  <10-12> MnSymbolA10
  <12-> MnSymbolA12}{}
\DeclareFontShape{U}{MnSymbolA}{b}{n}{
  <-6> MnSymbolA-Bold5
  <6-7> MnSymbolA-Bold6
  <7-8> MnSymbolA-Bold7
  <8-9> MnSymbolA-Bold8
  <9-10> MnSymbolA-Bold9
  <10-12> MnSymbolA-Bold10
  <12-> MnSymbolA-Bold12}{}
\DeclareSymbolFont{MnSyA} {U} {MnSymbolA}{m}{n}
\DeclareMathSymbol{\lcurvearrownw}{\mathrel}{MnSyA}{189}
\DeclareMathSymbol{\rcurvearrowne}{\mathrel}{MnSyA}{196}
\newcommand{\ubr}[1]{\underbracket[\fontdimen8\textfont3]{#1}}
\newcommand{\rA}{\mathsf{A}}
\newcommand{\rD}{\mathsf{D}}
\newcommand{\rE}{\mathsf{E}}
\newcommand{\rJ}{\mathsf{J}}
\newcommand{\Gr}{{\rm Gr}}
\newcommand{\CC}{\mathbb{C}}
\newcommand{\eroot}[1]{\begin{matrix}#1\end{matrix}}
\newcommand{\thfrac}[3]{ 	
\begin{aligned} #1\\ \hline \\[-3\jot] #2 \\ \hline \\[-3\jot] #3 \end{aligned}
}
\begin{document}

\title{Real roots in the root system $\mathsf{T}_{2,p,q}$}
\author{Karin Baur, Jian-Rong Li, and Andrei Smolensky}
\address{Karin Baur, School of Mathematics, University of Leeds, Leeds, LS2 9JT, Currently on leave from the University of Graz, Graz, Austria.} 
\email{ka.baur@me.com}
\address{Jian-Rong Li, Faculty of Mathematics, University of Vienna, Oskar-Morgenstern-Platz 1, 1090 Vienna, Austria.} 
\email{lijr07@gmail.com}
\address{Andrei Smolensky, Department of Mathematics and Mechanics, Saint Petersburg State University, Saint Petersburg, Russia.} 
\email{andrei.smolensky@gmail.com}
\date{}

\maketitle

\begin{abstract}
Motivated by the recent advances in the categorification of the cluster structure on the coordinate rings of Grassmannians of $k$-subspaces in $n$-space, we investigate a particular construction of root systems of type $\mathsf{T}_{2,p,q}$, including the type $\mathsf{E}_n$. This construction generalizes Manin's ``hyperbolic construction'' of $\mathsf{E}_8$ and reveals a lot of otherwise hidden regularities in this family of root systems.
\end{abstract}

%\tableofcontents

\section{Introduction}
The real roots of root systems of finite, affine and hyperbolic type can be characterized in terms of the coefficients of their decomposition into the linear combination of simple roots \cite[Proposition~5.10]{Kac}. 
For the root system with a simply-laced diagram this description boils down to the following: 
the real roots are the elements of the root lattice having the same norm as the simple roots. However, 
for non-hyperbolic root systems this condition is only necessary, but not sufficient. 
There is at present no general description of real roots available for non-hyperbolic root systems. 

We investigate the root system of type $\rJ_{k,n}=\mathsf{T}_{2,k,n-k}$, $k\leqslant n$, which has the following diagram:
\begin{center}
\begin{tikzpicture}
\tikzset{dynkin-vertex/.style={draw,circle,very thick,minimum size=3mm,inner sep=0mm}}
\tikzset{dynkin-edge/.style={draw,very thick}}

\node[dynkin-vertex,label={90}:{$\alpha_1$}] (a1) at (0,0) {};
\node[dynkin-vertex,label={90}:{$\alpha_2$}] (a2) at (1,0) {};
\begin{scope}[xshift=3cm]
\node[dynkin-vertex,label={90}:{$\alpha_k$}] (ak) at (0,0) {};
\node[dynkin-vertex,label={90}:{$\alpha_{k+1}$}] (akp1) at (1,0) {};
\node[dynkin-vertex,label={180}:{$\beta$}] (b) at (0,-1) {};
\begin{scope}[xshift=3cm]
\node[dynkin-vertex,label={90}:{$\alpha_{n-1}$}] (anm1) at (0,0) {};
\end{scope}
\end{scope}
\coordinate (a2akl) at ($(a2)!.25!(ak)$);
\coordinate (a2akr) at ($(a2)!.75!(ak)$);
\coordinate (akp1anm1l) at ($(akp1)!.25!(anm1)$);
\coordinate (akp1anm1r) at ($(akp1)!.75!(anm1)$);
\draw[dynkin-edge] (a1)--(a2)--(a2akl) (a2akr)--(ak)--(akp1)--(akp1anm1l) (akp1anm1r)--(anm1) (ak)--(b);
\draw[dynkin-edge,loosely dotted,line cap=round] (a2akl)--(a2akr) (akp1anm1l)--(akp1anm1r);
\end{tikzpicture}
\end{center}

Here we denote $\beta=\alpha_n$. The root system $\rJ_{3,n}$ is usually called the $\rE_n$ root system, while $\rJ_{1,n}=\rA_n$ and $\rJ_{2,n}=\rD_n$. In general, the root system $\rJ_{k,n}$ is non-finite, non-affine, and non-hyperbolic. With this paper, we give a characterization of real roots for 
a large class of root systems. 

Such root systems appear naturally in the study of generalized Del Pezzo varieties, that is, roughly speaking, the blow-ups of $\mathbb{P}^m$ at some finite set of points, see \cite{Coble,DO}. In particular, for the case $k=3$ and its relation to the Picard lattice of Del Pezzo surfaces see \cite[Section~25]{Manin}.

Another motivation for the present paper is the study of the rigid indecomposable modules in Grassmannian cluster categories $\mathrm{CM}(B_{k,n})$, see~\cite{JKS, BBG} and cluster variables in Grassmannian cluster algebras $\CC[\Gr_{k,n}]$ \cite{Sco06}. Cluster algebras are a class of commutative rings introduced by S. Fomin and A. Zelevinsky in their series of foundational papers \cite{BFZ, FZ1, FZ2, FZ3} (the paper \cite{BFZ} is with coauthor A. Berenstein). 
Scott proved that there is a cluster algebra structure on the coordinate ring $\CC[\Gr_{k,n}]$ of the Grassmannian varieties $\Gr_{k,n}$. 
Jensen, King and Su in~\cite{JKS} showed that the category ${\rm CM}(B_{k,n})$ of Cohen-Macaulay modules over a quotient $B_{k,n}$ of a preprojective algebra of affine type $A$ provides an additive categorification of $\CC[\Gr_{k,n}]$ and they showed that there is a cluster character 
on ${\rm CM}(B_{k,n})$ which sends rigid indecomposable modules to cluster variables in $\CC[\Gr_{k,n}]$. They proved these results by showing that the quotient of this category by a single projective-injective object is Geiss-Leclerc-Schroer's category ${\rm Sub} Q_k$ \cite{GLS} which categorifies the coordinate ring of the big cell in the Grassmannian $\Gr(k,n)$. In their paper, the authors associated the root system $\rJ_{k,n}$ to $\mathrm{CM}(B_{k,n})$.  
They pointed out that rigid indecomposable modules in ${\rm CM}(B_{k,n})$ seem to correspond to (real or imaginary) roots of $\rJ_{k,n}$.
Thus studying the roots of $\rJ_{k,n}$ will help to study the rigid indecomposable modules in Grassmannian cluster categories $\mathrm{CM}(B_{k,n})$ \cite{JKS} and cluster variables in Grassmannian cluster algebras $\mathbb{C}[\Gr_{k,n}]$ \cite{Sco06}.

In this paper, we give a characterization of the real positive roots in the root system $\rJ_{k,n}$. A real positive root $\gamma \in \rJ_{k,n}$ is said to have degree $d$ if when $\gamma$ is written as a linear combination of simple roots, the coefficient of $\beta$ in $\gamma$ is $d$. Degree $0$ positive roots are just positive roots of the natural root subsystem of type $\rA_{n-1}$ given by the nodes $\alpha_1,\dots, \alpha_{n-1}$. 
They are all of the form $\alpha_i + \cdots + \alpha_{j-1}$ for some $1\leqslant i<j\leqslant n$.

In an arbitrary root system there is a procedure to check whether a positive element of the root lattice is a real root: for a positive real root there exists a sequence of simple reflections which at each step lowers the height (and eventually leads to a simple root), see~\cite[Proposition~5.1(e)]{Kac}. However, there is no systematic way to find this sequence other than by trial and error.

Our main result is that if one realizes the root lattice $\mathbb{Z}\Delta$ as a sublattice of $\mathbb{Z}^n$ (see \cref{sec:real roots in Ekn}), then in terms of the ambient lattice the above procedure can be done much faster and easier, as follows. 
\begin{restatable*}{theorem}{maintheoremrestate}
%\label{thm:main theororem}
\label{theorem:positive real roots of degree d}
$x = (x_1,\ldots,x_n)^\top \in\mathbb{Z}\Delta$ is a positive real root of degree $\geqslant 1$ if and only if
\begin{enumerate}
\item $0 \leqslant x_i \leqslant \deg(x)$ for all $i=1,\ldots,n$, \label{item:0<x<d}
\item $q(x) = 2$, \label{item:q(x)=2}
\item repeated application of $x\longmapsto s_\beta(\dec(x))$ preserves property (\ref{item:0<x<d}) until it changes the sign of all entries of $x$. \label{item:w(dec(x))}
\end{enumerate}
\end{restatable*}

Here $q(x)$ is a quadratic form (\ref{eq:q(x)}) on $\mathbb{Z}^n$, $s_{\beta}$ is the simple reflection associated with $\beta$,
\[ s_{\beta}\left((x_1,\ldots,x_n)^\top\right) = (x_1+r,\ldots,x_k+r,x_{k+1},\ldots,x_n)^\top, \]
$r = x_{k+1} + \ldots + x_n - 2\deg(x)$, $\deg(x) = \frac{1}{k}\sum_{i=1}^n x_i$, and 
$\dec(x)\in\mathbb{Z}\Delta$ is the element obtained from permuting the entries of $x_i$ to have them 
in decreasing order, i.e. if $\dec(x)=(x_1',\dots, x_n')$ then 
$x_1'\geqslant x_2'\geqslant \ldots \geqslant x_n'$.

The procedure in \cref{theorem:positive real roots of degree d} allows a very efficient enumeration of real roots. This enumeration reveals many regularities which are otherwise harder to see. Among other things, it provides another view on Manin's ``hyperbolic construction'' of $\rE_8$, which can be seen as the inclusion $\rE_8\subset \rJ_{4,9}$. This also highlights the connection between the affine roots of $\rE_9$ inside $\rJ_{4,10}$  and the exceptional curves on del Pezzo surfaces.

Jensen, King and Su conjectured \cite{JKS} that for every indecomposable module $M$ in ${\rm CM}(B_{k,n})$, there is a corresponding real or imaginary root $\varphi(M)$ (see Section \ref{sec:cluster algebras} for the definition of $\varphi(M)$) in the root system $\rJ_{k,n}$. It is conjectured in \cite[Conjecture~5.8]{BBGL} that whenever $M$ in ${\rm CM}(B_{k,n})$ is rigid indecomposable and $\varphi(M)$ is a real root in $\rJ_{k,n}$, then the profile $P_M$ (a profile is a certain array of integers, see Section \ref{sec:cluster algebras} for the definition) is a cyclic permutation of a canonical profile. The results about real roots in $\rJ_{k,n}$ in Theorem \ref{theorem:positive real roots of degree d} are thus expected to help with the characterization of rigid indecomposable modules in ${\rm CM}(B_{k,n})$ corresponding to real roots.

The paper is organized as follows. In \cref{sec:real roots in Ekn} we construct the root lattice and the action of the Weyl group on it, and give a characterization of real roots. In \cref{sec:symmetries and embeddings} we note various relations between the root systems $\rJ_{k,n}$ for distinct $k,n$. \cref{sec:enumeration of roots} is devoted to the enumeration of real roots and to some particular families of real roots.
In \ref{sec:systems of finite types} we discuss the finite types, 
i.e. the types  
$\rA_n$, $\rD_n$, $\rE_6$, $\rE_7$ and $\rE_8$ and give a simple description of the fundamental weights. \cref{sec:affine roots} provides a simpler description of isotropic roots in root systems of affine types $\rJ_{3,9}=\rE_8^{(1)}$ and $\rJ_{4,8}=\rE_7^{(1)}$. Also, in \cref{sec:almost real roots} we introduce the notion of ``almost real roots''. These are not 
roots but closely resemble the real roots. \cref{sec:hyperbolic construction} compares the description given in the present paper with Manin's ``hyperbolic construction'' of $\rE_8$. \cref{sec:cluster algebras} describes in greater details the connection to the cluster structures on the coordinate rings of Grassmannians mentioned above.

\subsection*{Acknowledgments} We would like to thank Alastair King for very helpful discussions. 
We also thank the anonymous referee for their work and for their helpful comments. 
K. B. was supported by a Royal Society Wolfson Fellowship 
RSWF/R1/180004 and by the EPSRC Programme Grant EP/W007509/1. 
She is currently on leave from the University of Graz.
She would like to thank the Isaac Newton Institute for Mathematical Sciences, Cambridge, for support and hospitality during the programme CAR 
where work on this paper was undertaken. This was supported by EPSRC grant no EP/R014604/1. 
 J.-R.L. was supported by the Austrian Science Fund (FWF): M 2633-N32 Meitner Program and P 34602 Einzelprojekte. A.S. was supported by Russian Science Foundation (RSF) (project No. 17-11-01261).

\section{Real roots in $\rJ_{k,n}$ root system} \label{sec:real roots in Ekn}
\subsection{Root lattice}\label{ssec:root-lattice}

Jensen, King, and Su gave a description of the root system $\rJ_{k,n}$ \cite[Section 2]{JKS}. This description of the root system arises naturally as the lattice that grades the Grassmannian cluster algebra ${\rm Gr}_{k,n}$. They observed that, for the right quadratic form $q(x)$ there seems to be a relationship between cluster variables and positive degree roots. We recall their results in the following.

Let $n$ and $k<n$ be two natural numbers and let $e_1,\ldots,e_n$ be the standard basis of $\mathbb{R}^n$. Let 
\[ \alpha_i = e_{i+1} - e_i\quad \text{for}\quad i=1,\ldots,n-1,\qquad \beta = e_1 + \ldots + e_k, \]
and consider the lattice
\[ \mathbb{Z}\Delta = \langle \beta,\alpha_1,\ldots,\alpha_{n-1}\rangle_{\mathbb{Z}} = \left\{ (x_1,\ldots,x_n)^\top \in \mathbb{Z}^n \ \middle|\ k\ \text{divides}\ x_1+\ldots+x_n\right\}, \]
called the \emph{root lattice} and equipped with the quadratic form
\begin{align}
q(x) = \sum_{i=1}^n x_i^2 + \frac{2-k}{k^2}\bigg(\sum_{i=1}^n x_i\bigg)^2. \label{eq:q(x)}
\end{align}
and an inner product given by its polarization $(x,y)=\frac{1}{2}\big(q(x+y)-q(x)-q(y)\big)$.

Any element $\alpha\in\mathbb{Z}\Delta$ can be written as $\alpha = m_\beta\beta + m_1\alpha_1+\ldots+m_{n-1}\alpha_{n-1}$, and the coefficient $m_\beta$ is called the \emph{degree} of $\alpha$, denoted by $\deg(\alpha)$. If $x = (x_1,\ldots,x_n)^\top\in\mathbb{Z}\Delta$, then
\[ \deg(x) = (x_1+\ldots+x_n)/k \qquad \text{and} \qquad q(x) = x_1^2+\ldots+x_n^2+(2-k)\deg(x)^2. \]

A direct calculation shows that the inner products are $(\alpha_i,\alpha_i)=2$ for $i=1,\dots, n-1$ 
and $(\beta,\beta)=2$ and that 
\[ (\alpha_i,\alpha_j) = \begin{cases}
%, & \text{if } i=j, \\
-1, & \text{if } |i-j|=1, \\
0, & \text{otherwise,}
\end{cases} \qquad
(\beta,\alpha_i) = \begin{cases}
-1, & \text{if } i=k, \\
0, & \text{otherwise},
\end{cases} %\qquad
%(\beta,\beta) = 2,
 \]
so that the Gram matrix $A$ of this inner product with respect to the basis 
$\beta,\alpha_1,\ldots,\alpha_{n-1}$ 
is the generalized Cartan matrix of the root system of type $\rJ_{k,n}$.

The matrix of the basis change from $\{\beta,\alpha_1,\dots, \alpha_{n-1}\}$ to $\{e_1,\dots, e_n\}$ is
\[
C = \begin{pmatrix}
1 & -1 \\1 & 1 & -1 \\
\vdots && 1 & -1 \\
1 &&& 1 & \ddots \\
0 &&&& \ddots & -1 \\
\vdots &&&&& 1 & -1 \\
0 &&&&&& 1
\end{pmatrix}.
\]
Therefore if 
\begin{gather*}
\gamma=m_\beta \beta+m_1\alpha_1+\ldots+m_{n-1}\alpha_{n-1}, \\
\shortintertext{then}
\gamma = x_1e_1+\ldots+x_ne_n, \\
\shortintertext{where}
(x_1,\ldots,x_n)^\top = C\cdot(m_\beta,m_1,\ldots,m_{n-1})^\top.
\end{gather*}

The inverse for $C$ is calculated as $C^{-1} = VDU$, where $V$ is the lower triangular matrix that differs from the identity matrix only in its first column, which equals
\[ (1,1-k,2-k,\ldots,-1,0,\ldots,0)^\top, \]
$D=\diag(\frac{1}{k},1\ldots,1)$, and $U$ is an upper triangular matrix having $1$ in all of its entries on and above the diagonal. Now
\[ Ux=(x_1+\ldots+x_n, x_2+\ldots+x_n,\ldots,x_n)^\top, \]
so the first entry equals $kd$, where $d$ is the degree of $x$. Thus the first entry of $DUx$ equals $d$, and the same holds for $VDUx$.

For the other entries of $VDUx$, note that
$x_i+\ldots+x_n$ can be rewritten as $kd-x_1-\ldots-x_{i-1}$. Thus for $i=2,\ldots,k$ the $i$-th entry of $VDUx$ equals
\[ (kd-x_1-\ldots-x_{i-1}) + (i-k-1)d = (i-1)d-x_1-\ldots-x_{i-1}. \]
In particular, for $i=k$ it is
\[ (k-1)d-x_1-\ldots-x_{k-1} = x_k+\ldots+x_n-d. \]
For $i>k$ the $i$-th entry is the same as the $i$-th entry of $Ux$, that is, $x_i+\ldots+x_n$.

Many standard notions can be directly expressed in terms of the $e_i$ basis. For example, if $\gamma=d\beta+\sum m_i\alpha_i$, the scalar product $(\beta,\gamma)=2d-m_k$ can be computed as $2d-(x_{k+1}+\ldots+x_n)$.

\begin{example} \label{example:dn}
We demonstrate how the correspondence between the two bases described above works in the second simplest case, that is in the case $k=2$ and $\rJ_{k,n}=D_n$.
In $\rD_n$ the roots of degree $1$ are of one of the following three forms 
(written in the $e_i$'s on the left and in terms of the simple roots on the right): 
\begin{align*}
 & e_1+e_j \quad \leftrightsquigarrow \quad \eroot{0&1&1&\ldots&1&0&\ldots&0\\&1&&&\hspace*{-1.5ex}\mathrlap{\lcurvearrownw j-1}} \quad\quad \text{with}\ 2\leqslant j \leqslant n-1, \\
& e_2+e_j \quad \leftrightsquigarrow \quad \eroot{1&1&1&\ldots&1&0&\ldots&0\\&1&&&\hspace*{-1.5ex}\mathrlap{\lcurvearrownw j-1}} \quad\quad \text{with}\ 3\leqslant j \leqslant n-1, \\
& e_i+e_j \quad \leftrightsquigarrow \quad \eroot{1&2&2&\ldots&2&1&\ldots&1&0&\ldots&0\\&1&&&\mathllap{i-1\rcurvearrowne}\hspace*{-1.5ex}&&&\hspace*{-1.5ex}\mathrlap{\lcurvearrownw j-1}}\quad \text{with}\ 3\leqslant i<j\leqslant n-1.
\end{align*}
So in this case, the description of the positive roots in terms of the $e_i$'s coincides (up to the ordering of the simple roots) with the standard realization~\cite[Ch.~VI, \S4, no.~8]{Bou1}: the positive roots of $\rD_n$ are
%(\revise{should we say: the set of positive roots in $\rD_n$ is (because we didn't introduce the notation $\rD_n^+$)?} \revise{In the above, there is no $e_i - e_j$, but in the following there are $e_i-e_j$.})
%(\kb{I agree - write ``the positive roots of $\rD_n$ are...''.})
\begin{gather*}
e_i\pm e_j \quad\text{with}\quad 1\leqslant i<j\leqslant n. \\
\shortintertext{and the simple roots are}
\alpha_i = e_{i+1}-e_i,\quad 1\leqslant i\leqslant n-1, \qquad \beta = e_1+e_2.
\end{gather*}
Note that in~\cite[Ch.~VI, \S4, no.~8]{Bou1} the numbering of the simple roots is reversed, and $\beta$ is attached to $\alpha_{n-2}$.
\end{example}

\subsection{Weyl group action}\label{ssec:Weyl_group}
For $i=1,\ldots,n-1$ denote by $s_i$ the involution on $\mathbb{Z}^n$ induced by the transposition $(i, i+1)$ on the basis $e_1,\ldots,e_n$, and its restriction on $\mathbb{Z}\Delta$. Denote also by $s_\beta$ the linear map
\[ x = (x_1,\ldots,x_n)^\top \longmapsto (x_1+r,\ldots,x_k+r,x_{k+1},\ldots,x_n)^\top, \]
where $r = x_{k+1} + \ldots + x_n - 2\deg(x)$. The map $s_\beta$ acts on $\mathbb{Z}\Delta$, indeed, if $x\in\mathbb{Z}\Delta$, then
\[ (x_1+r)+\ldots+(x_k+r) + x_{k+1}+\ldots+x_n = \sum x_i + kr \]
is also divisible by $k$, and $\deg(s_\beta x) = \deg(x) + r$.
\begin{lemma}
The above formulas define an action of the Weyl group $W(\rJ_{k,n})$ on $\mathbb{Z}\Delta$.
\end{lemma}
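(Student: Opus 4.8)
The plan is to reduce the statement to checking relations. By definition $W(\rJ_{k,n})$ is the Coxeter group presented by the generalized Cartan matrix $A$, so by the universal property of a group given by generators and relations it suffices to exhibit $s_1,\dots,s_{n-1},s_\beta$ as involutions satisfying the braid relations encoded by $A$; the assignment of generators then extends to a homomorphism $W(\rJ_{k,n})\to GL(\mathbb{Z}\Delta)$, i.e. to an action. The first step is to identify each generator with the orthogonal reflection in the corresponding simple root. For $s_i$ this is immediate: it is the transposition of $e_i$ and $e_{i+1}$, which preserves $q$ (as $q$ is symmetric under permutations of the coordinates), negates $\alpha_i=e_{i+1}-e_i$, and fixes its orthogonal complement, so $s_i(x)=x-(x,\alpha_i)\alpha_i$. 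For $s_\beta$ I would invoke the scalar product already computed above, $(\beta,x)=2\deg(x)-(x_{k+1}+\ldots+x_n)=-r$; hence $s_\beta(x)=x+r\beta=x-(x,\beta)\beta$ is exactly the reflection in $\beta$ (and it is genuinely linear, since $r$ is a linear functional of $x$).

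With every generator exhibited as a reflection, the relations $s_i^2=s_\beta^2=\mathrm{id}$ are automatic, since any $s_\gamma(x)=x-(x,\gamma)\gamma$ with $(\gamma,\gamma)=2$ squares to the identity. I would then organize the braid relations along the Dynkin diagram. The maps $s_1,\dots,s_{n-1}$ are the adjacent transpositions generating the symmetric group $S_n$, so they satisfy the type $\rA_{n-1}$ relations $(s_is_{i+1})^3=\mathrm{id}$ and $(s_is_j)^2=\mathrm{id}$ for $\lvert i-j\rvert\geqslant 2$. For $s_\beta$ and $s_i$ with $i\neq k$, the transposition $s_i$ permutes either two of the first $k$ coordinates or two of the last $n-k$ coordinates, hence leaves the scalar $r=\sum_{j>k}x_j-2\deg(x)$ unchanged; since $s_\beta$ adds this fixed $r$ to the first $k$ coordinates, a short check gives $s_\beta s_i=s_is_\beta$, matching $(\beta,\alpha_i)=0$.

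The main obstacle is the one remaining relation $(s_\beta s_k)^3=\mathrm{id}$, corresponding to the edge joining $\beta$ and $\alpha_k$ with $(\beta,\alpha_k)=-1$. Here $s_k$ does cross the boundary between the two coordinate blocks, so $s_\beta$ and $s_k$ genuinely fail to commute and one must pin down the order of their product. I would resolve this geometrically: $s_\beta$ and $s_k$ are the orthogonal reflections in $\beta$ and $\alpha_k$, which span a rank-two sublattice on which the restricted form has Gram matrix $\left(\begin{smallmatrix}2&-1\\-1&2\end{smallmatrix}\right)$, of determinant $3$ and hence positive definite. On this plane the two reflections meet at angle $2\pi/3$ and so generate the dihedral group of order $6$, while both fix the orthogonal complement (on which the form is nondegenerate, giving $V=\mathrm{span}(\beta,\alpha_k)\oplus\mathrm{span}(\beta,\alpha_k)^\perp$) pointwise; therefore $s_\beta s_k$ has order exactly $3$. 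More conceptually, once every generator is realized as the reflection in a simple root whose pairwise inner products reproduce $A$, the assertion that these reflections satisfy \emph{precisely} the Coxeter relations of $W(\rJ_{k,n})$ is the content of the standard theorem on reflection representations (see \cite[\S3.7--3.13]{Kac}), which is the cleanest way to conclude. Either route shows that the prescribed relations hold, so the formulas define an action of $W(\rJ_{k,n})$ on $\mathbb{Z}\Delta$.
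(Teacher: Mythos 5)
Your proof is correct, and it verifies the key relation by a genuinely different route than the paper. Both arguments reduce the lemma to checking the Coxeter relations, and both dispose of the relations within $s_1,\dots,s_{n-1}$ and the commutations $s_\beta s_i=s_is_\beta$ ($i\neq k$) in essentially the same way. The difference is in the braid relation for $s_\beta$ and $s_k$: the paper proves $s_ks_\beta s_k=s_\beta s_ks_\beta$ by writing out both sides in the coordinates $x_i$ and matching the shift parameters $r,t,t'$ by hand, whereas you first identify every generator with the orthogonal reflection $x\mapsto x-(x,\gamma)\gamma$ in the corresponding simple root (using $(\beta,x)=2\deg(x)-(x_{k+1}+\cdots+x_n)=-r$, which the paper records just before this lemma) and then get the order of $s_\beta s_k$ from the rank-two positive-definite sublattice $\mathrm{span}(\beta,\alpha_k)$ with Gram matrix $\left(\begin{smallmatrix}2&-1\\-1&2\end{smallmatrix}\right)$, or from the standard reflection-representation theorem. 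Your approach is more conceptual and scales without further computation; it also makes \cref{lemma:q is invariant} and the preservation of $\mathbb{Z}\Delta$ immediate, since $(x,\gamma)\in\mathbb{Z}$ for $x$ in the root lattice and reflections are isometries. The paper's computation is more elementary and self-contained, and the explicit formulas it produces for $s_\beta s_k(x)$ etc.\ are reused implicitly in later coordinate manipulations. Two small points to tighten: the direct-sum decomposition $V=U\oplus U^{\perp}$ requires nondegeneracy of the form on $U=\mathrm{span}(\beta,\alpha_k)$ (which you have, determinant $3$), not on $U^{\perp}$ --- the form on the complement may well be degenerate in the affine cases --- so make sure the parenthetical attaches to the right subspace; and you should state explicitly that each reflection preserves the lattice $\mathbb{Z}\Delta$, which follows because the Gram matrix in the basis $\beta,\alpha_1,\dots,\alpha_{n-1}$ is the integral Cartan matrix.
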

\begin{proof}
To show that the action of $s_\beta,s_1,\ldots,s_n$ define an action of the Weyl group, it is enough to show that they satisfy the defining relations of $W(\rJ_{k,n})$ in its standard presentation as a Coxeter group, that is
\begin{align*}
& s_\beta^2 = s_1^2 = \ldots = s_{n-1}^2 = \operatorname{id}, \\
& s_i s_j s_i = s_j s_i s_j \quad \text{for} \quad |i-j|=1, \quad i,j\leqslant n-1, \\
& s_i s_j=s_j s_i \quad \text{for} \quad |i-j|>1, \quad i,j\leqslant n-1, \\
& s_\beta s_k s_\beta = s_k s_\beta s_k, \\
& s_\beta s_i = s_i s_\beta \quad \text{for} \quad i\neq k.
\end{align*}
Note that the relations not involving $s_\beta$ are satisfied because $s_1,\ldots,s_{n-1}$ are defined as the fundamental transpositions, which are known to be the standard Coxeter generators of the permutation group $S_n$ \cite[Section~2.8.1]{Wilson}.

Now set $x=(x_1,\ldots,x_n)\in\mathbb{Z}\Delta$. Then
\[ x \xmapsto{\ \ \mathclap{s_\beta} \ \ } (x_1+r,\ldots,x_k+r,x_{k+1},\ldots,x_n)^\top \xmapsto{\ \ \mathclap{s_\beta} \ \ } (x_1+r+r',\ldots,x_k+r+r',x_{k+1},\ldots,x_n)^\top, \]
where $r = x_{k+1}+\ldots+x_n-2\deg(x)$ and $r' = x_{k+1}+\ldots+x_n-2\deg(s_\beta x)$. But $\deg(s_\beta x) = \deg(x)+r$, hence $r'=r-2r$, so $s_\beta^2(x)=x$.

The commutation relation for $s_\beta$ and $s_i$, $i\neq k$ are obvious from the definition.

To prove the braiding relation for $s_\beta$ and $s_k$ consider
\begin{align*}
x & \xmapsto{\ \ \mathclap{s_k} \ \ } (x_1,\ldots,x_{k-1},x_{k+1},x_k,x_{k+2},\ldots,x_n)^\top \\
& \xmapsto{\ \ \mathclap{s_\beta} \ \ } (x_1+r,\ldots,x_{k-1}+r,x_{k+1}+r,x_k,x_{k+2},\ldots,x_n)^\top \\
& \xmapsto{\ \ \mathclap{s_k} \ \ } (x_1+r,\ldots,x_{k-1}+r,x_k,x_{k+1}+r,x_{k+2},\ldots,x_n)^\top,
\end{align*}
where $r = x_k+x_{k+2}+\ldots+x_n-\deg(x)$, and
\begin{align*}
x & \xmapsto{\ \ \mathclap{s_\beta} \ \ } (x_1+t,\ldots,x_k+t,x_{k+1},\ldots,x_n)^\top \\
& \xmapsto{\ \ \mathclap{s_k} \ \ } (x_1+t,\ldots,x_{k-1}+t,x_{k+1},x_k+t,x_{k+2},\ldots,x_n)^\top \\
& \xmapsto{\ \ \mathclap{s_\beta} \ \ } (x_1+t+t',\ldots,x_{k-1}+t+t',x_{k+1}+t',x_k+t,x_{k+2},\ldots,x_n)^\top,
\end{align*}
where $t = x_{k+1}+\ldots+x_n-2\deg(x)$ and $t' = x_k+t+x_{k+2}+\ldots x_n-2\deg(s_k s_\beta x)$. But $\deg(s_k s_\beta x) = \deg(x) + t$, so $t' = x_k - x_{k+1}$. Thus $r=t+t'$, and also $x_{k+1}+t'=x_k$ and $x_k+t=x_{k+1}+r$, which means that $s_k s_\beta s_k(x) = s_\beta s_k s_\beta(x)$.
\end{proof}
\begin{lemma} \label{lemma:q is invariant}
$W(\rJ_{k,n})$ acts on $\mathbb{Z}\Delta$ by isometries, that is, $q(wx)=q(x)$ for any $w\in W(\rJ_{k,n})$.
\end{lemma}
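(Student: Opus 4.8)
The plan is to exploit the fact, just established, that $W(\rJ_{k,n})$ is generated by the simple reflections $s_\beta, s_1, \ldots, s_{n-1}$. Since a composition of isometries is again an isometry, it suffices to verify that each generator preserves $q$; the full statement then follows by writing an arbitrary $w$ as a word in the generators. So the proof splits into the permutation generators and the single reflection $s_\beta$.

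For the transpositions $s_1, \ldots, s_{n-1}$ this is immediate and requires no computation. Each $s_i$ merely swaps two of the coordinates $x_1, \ldots, x_n$, while the form $q(x) = \sum_i x_i^2 + \frac{2-k}{k^2}\big(\sum_i x_i\big)^2$ depends only on the symmetric quantities $\sum_i x_i^2$ and $\sum_i x_i$. Hence $q(s_i x) = q(x)$ trivially for every $i$.

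The only substantive case is $s_\beta$, and I would reduce it to a one-line algebraic identity. Setting $d = \deg(x)$, $S = x_1 + \cdots + x_k$, and $T = x_{k+1} + \cdots + x_n$, we have $S + T = kd$ and $r = T - 2d$, together with $\deg(s_\beta x) = d + r$ from the preceding discussion. Expanding $\sum_{i=1}^k (x_i + r)^2 = \sum_{i=1}^k x_i^2 + 2rS + kr^2$ and using $q(x) = \sum_i x_i^2 + (2-k)d^2$, I would compute
\[ q(s_\beta x) - q(x) = 2rS + kr^2 + (2-k)\big((d+r)^2 - d^2\big) = 2r\big(S + (2-k)d + r\big). \]
Substituting $S = kd - T$ and $r = T - 2d$ collapses the bracketed factor to $(kd - T) + (2-k)d + (T - 2d) = 0$, so $q(s_\beta x) = q(x)$, as desired.

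There is no genuine obstacle here: the whole argument rests on the single cancellation for $s_\beta$, and the substitution $S = kd - T$, $r = T - 2d$ makes it transparent. I would note as an aside that one could instead appeal to general theory, since the Gram matrix $A$ computed above is the generalized Cartan matrix of $\rJ_{k,n}$ and the $s_i, s_\beta$ are by construction the associated simple reflections, which preserve the corresponding bilinear form by definition; but the direct verification is shorter and keeps the argument self-contained.
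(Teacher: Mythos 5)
Your proposal is correct and follows essentially the same route as the paper: reduce to the generators, observe that the transpositions preserve $q$ because the form is symmetric in the coordinates, and verify $s_\beta$ by the direct expansion in which the factor $S+(2-k)d+r$ collapses to zero. The paper's computation is the same cancellation (written slightly differently, with the factor $r+2d-\sum_{i=k+1}^n x_i$), so there is nothing to add.
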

\begin{proof}
The quadratic form $q$ is invariant under the action of $s_i$, because $q$ is defined in terms of symmetric polynomials. Concerning the action of $s_\beta$, denote $r = x_{k+1} + \ldots +x_n - 2d$, where $d = \deg(x)$, and consider
\begin{align*}
q(s_\beta x) & = \sum_{i=1}^k (x_i+r)^2 + \sum_{\mathclap{i=k+1}}^n x_i^2 + \frac{2-k}{k^2}\bigg( kr + \sum_{i=1}^n x_i \bigg)^2 \\
& = \sum_{i=1}^n x_i^2 + 2r\bigg(\sum_{i=1}^k x_i\bigg) + kr^2 + (2-k)(d^2+2dr+r^2) \\
& = q(x) + 2r\bigg( \sum_{i=1}^k x_i + 2d - kd + r \bigg) \\
& = q(x) + \bigg( r + 2d - \sum_{\mathclap{i=k+1}}^n x_i \bigg) = q(x). \qedhere
\end{align*}
\end{proof}
\begin{remark}
This action is faithful and coincides with the standard action of $W(\rJ_{k,n})$ on the root lattice.
\end{remark}
\begin{proof}
Straightforward check for the action of the generators on the simple roots.
\end{proof}

\subsection{Real roots and degree change}
The set of real roots $\Delta_{\mathrm{re}}$ of the root system $\Delta$ is defined as the union of the Weyl group orbits of its simple roots. Since $\rJ_{k,n}$ is simply-laced, all simple roots lie in the same orbit, and so $\Delta_{\mathrm{re}} = W(\rJ_{k,n})\beta$. Recall that in the basis $e_1,\ldots,e_n$ one has $\beta=(1,\ldots,1,0,\ldots,0)^\top$.

The set of positive real roots is denoted by $\Delta_{\mathrm{re}}^+$.
\begin{remark} \label{remark:degree 0 roots}
Real roots of degree $0$ form a subsystem of type $\rA_{n-1}$ and are of the form $e_i-e_j$, $i\neq j$. The root $e_i-e_j$ is positive if $i>j$.
\end{remark}
\begin{lemma} \label{lemma:0<x<d is preserved}
If $x = (x_1,\ldots,x_n)^\top \in\Delta_{\mathrm{re}}^+$ and $\deg(x) = d \geqslant 1$, then $0\leqslant x_i\leqslant d$ for all $i=1,\ldots,n$.
\end{lemma}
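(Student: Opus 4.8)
The plan is to induct on the height of $x$, using the descent guaranteed by \cite[Proposition~5.1(e)]{Kac}: every positive real root that is not simple admits a simple reflection strictly lowering its height, and such a reflection keeps it inside $\Delta_{\mathrm{re}}^+$. Since $\beta$ is the only simple root of degree $\geqslant 1$ (the $\alpha_i$ all have degree $0$), any $x\in\Delta_{\mathrm{re}}^+$ with $\deg(x)=d\geqslant 1$ and $x\neq\beta$ is non-simple, so a height-lowering reflection $s$ exists and $y:=s(x)\in\Delta_{\mathrm{re}}^+$ has strictly smaller height. The base case $x=\beta=(1,\ldots,1,0,\ldots,0)^\top$ plainly satisfies $0\leqslant x_i\leqslant 1=d$. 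As a sanity check on the target inequality, note that positivity alone already gives $x_1\leqslant d$ and $x_n\geqslant 0$, since $d-x_1$ and $x_n$ are exactly the (non-negative) $\alpha_1$- and $\alpha_{n-1}$-coefficients of $x$; the content of the lemma is to upgrade these to a bound on every coordinate.

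First I would dispose of the easy reflections. If $s=s_j$ for some $j\leqslant n-1$, then $s$ merely transposes two coordinates, so $x$ and $y$ have the same multiset of entries and the same degree. The assertion $0\leqslant x_i\leqslant d$ for $x$ is then literally equivalent to the same assertion for $y$, which holds by the inductive hypothesis.

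The substance is the case $s=s_\beta$. Here $y=x-(\beta,x)\beta$ with $(\beta,x)>0$, so $d':=\deg(y)=d-(\beta,x)<d$, and $y$ differs from $x$ only in its first $k$ coordinates, where $y_i=x_i-(\beta,x)$. Assuming $d'\geqslant 1$, the inductive hypothesis gives $0\leqslant y_i\leqslant d'$ for all $i$. For $i>k$ we have $x_i=y_i\in[0,d']\subseteq[0,d]$, while for $i\leqslant k$ we have $x_i=y_i+(\beta,x)=y_i+(d-d')$, which lies in $[\,d-d',\,d\,]\subseteq[0,d]$. Thus the bound transfers to $x$; the crucial point is that the quantity $(\beta,x)$ subtracted from the head coordinates is exactly the drop in degree, so the two bounds stay aligned.

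The one gap is the boundary case $d'=0$, where the inductive hypothesis (phrased for degree $\geqslant 1$) does not apply, and indeed a degree-$0$ root can have a coordinate equal to $-1$. I would handle this directly: a positive root of degree $0$ is $y=e_a-e_b$ with $a>b$, and the relation $\deg(s_\beta y)=-(\beta,y)=d$ forces $(\beta,y)=-d$. Since $\deg(y)=0$ gives $(\beta,y)=-(y_{k+1}+\cdots+y_n)$, checking the position of $a,b$ relative to $k$ shows the only possibility is $b\leqslant k<a$ with $d=1$, whence $x=s_\beta(y)=y+\beta$ is a $0/1$ vector and $0\leqslant x_i\leqslant 1=d$ is immediate. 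The main obstacle is thus not an estimate but the bookkeeping around $s_\beta$: recognizing that it preserves $\Delta_{\mathrm{re}}^+\setminus\{\beta\}$ and that its degree shift matches the shift of the first $k$ coordinates, together with isolating and resolving the $d'=0$ boundary case.
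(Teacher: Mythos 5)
Your argument is correct and runs on the same engine as the paper's: descend along a height-lowering reflection and use the fact that $s_\beta$ shifts the first $k$ coordinates by exactly the amount by which it shifts the degree, so that the window $[0,\deg(x)]$ transfers between $x$ and its reflection. The divergence is in how the induction is organized and terminated. The paper inducts on the degree rather than the height: its base case is degree $1$, settled by combining $\sum x_i=k$ with $q(x)=2$ to force every entry into $\{0,1\}$, and the descent to a lower-degree root is extracted from Moody's result that a positive real root is reached from one of smaller degree by height-increasing reflections (the same fact you quote from Kac, read in the opposite direction); permutation reflections are absorbed exactly as in your proof. You instead anchor at $x=\beta$, which obliges you to treat the boundary case $\deg(s_\beta x)=0$ separately, and your resolution of it is correct: among the degree-$0$ positive roots $e_a-e_b$ with $a>b$, only $b\leqslant k<a$ is compatible with $(\beta,y)=-d$, forcing $d=1$ and making $x=y+\beta$ a $0/1$ vector. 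This case analysis is, if anything, more explicit than the paper's write-up, whose inductive step applies the induction hypothesis to the reflected root without remarking that its degree must still be at least $1$. In short, the two proofs are the same in substance; yours trades the $q(x)=2$ computation at degree $1$ for an explicit degree-$0$ boundary analysis.
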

\begin{proof}
Induction by $\deg(x)$. 

Consider first the case $\deg(x)=1$. This means that $x_1+\ldots+x_n=k$. On the other hand, for a real root $x$ one has $q(x)=2$. But
\[ q(x) = \sum x_i^2 + \frac{2-k}{k^2} k^2 = \sum x_i^2+2-k, \]
hence $x_1^2+\ldots+x_n^2=k$. It follows that all $x_i$ are either $0$ or $1$ (otherwise $\sum x_i^2 > \sum x_i$).

Now if $x\in\Delta_{\mathrm{re}}$ has $\deg(x)>1$, then there is $y\in\Delta_{\mathrm{re}}^+$ such that $\deg(y)<\deg(x)$ and $x = w(y)$ for some $w\in W(A)$. The assumption $\deg(y)<\deg(x)$ holds because $w$ can be chosen to be a sequence of reflections which only increase the height, see~\cite[Proposition~1]{Moody}.

Since $w = \sigma_1 s_\beta \sigma_2 \dots s_\beta \sigma_m$ for some $\sigma_1,\ldots,\sigma_m \in W(\rA_{n-1}) \cong S_n$, one can replace $y$ by $=\sigma_2 s_\beta\dots\sigma_m(y)$ and $x$ by $\sigma_1^{-1}(x)$, so that $x=s_{\beta}(y)$.

Now $y = s_\beta(x) = (x_1+r,\ldots,x_k+r,x_{k+1},\ldots,x_n)^\top$ with $r<0$, and $\deg(y) = \deg(s_\beta x) = \deg(x)+r$. By the induction hypothesis, $0\leqslant x_i+r\leqslant\deg(y)$ for $i=1,\ldots,k$, and $0\leqslant x_i\leqslant \deg(y)$ for $i>k$, so $0\leqslant x_i\leqslant\deg(x)$ for all $i$.
%(\kb{not sure I understand: I can see that $w_\beta(x)=y=(x_1+r,\dots)^\top$ but why does this imply 
%that the $x_i$ are $\le d$?}) 
%so the desired inequalities obviously hold.
\end{proof}
\begin{remark}
For $x\in\mathbb{Z}\Delta$, which is not a real root, the inequalities $0\leqslant x_i\leqslant \deg(x)$ are only guaranteed to be preserved by $s_\beta$ in case this action increases the degree. That is, if $0\leqslant x_i\leqslant \deg(x)$, $\deg(x)>0$ and $0<\deg(s_\beta(x))<\deg(x)$, then $s_\beta(x)$ can have negative entries or entries greater than its degree. See \cref{sec:almost real roots} for examples.
\end{remark}
We will now establish some conditions on $x\in\mathbb{Z}\Delta$ which guarantee that $s_\beta$ lowers the degree of $x$.
\begin{lemma} \label{lemma:w reduces degree}
If $x=(x_1,\ldots,x_n)^\top\in\mathbb{R}^n$ and $d\in\mathbb{R}$ satisfy
\begin{align*}
& d\geqslant x_1\geqslant\ldots\geqslant x_n\geqslant0,\\
& x_1+\ldots+x_n=kd,\\
& q(x)=\sum x_i^2+(2-k)d^2>0,
\end{align*}
then $x_{k+1}+\ldots+x_n < 2d$.
\end{lemma}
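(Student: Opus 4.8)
The plan is to argue by contraposition. Writing $S=x_1+\cdots+x_k$ and $T=x_{k+1}+\cdots+x_n$ (so that $S+T=kd$), the claim $T<2d$ is equivalent to $q(x)\leqslant d(2d-T)$ together with the observation that $q(x)>0$ forces $d>0$. So I would assume nothing about $T$ and instead aim to prove the single inequality $\sum_i x_i^2\leqslant kd^2-dT$; everything else is then bookkeeping. The whole difficulty is thus concentrated in finding a sharp enough upper bound for $\sum_i x_i^2$ that actually sees $T$.

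The naive estimate $x_i^2\leqslant d\,x_i$ (valid since $0\leqslant x_i\leqslant d$) only gives $\sum_i x_i^2\leqslant kd^2$, hence $q(x)\leqslant 2d^2$, which is useless because it is blind to $T$. This is the main obstacle: one must exploit \emph{both} the box constraints and the ordering $x_1\geqslant\cdots\geqslant x_n$, and in particular the threshold value $c:=x_{k+1}$ that separates the ``head'' $x_1,\ldots,x_k$ from the ``tail'' $x_{k+1},\ldots,x_n$.

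The trick I would use is to bound each coordinate by the quadratic vanishing at the two endpoints of the interval it is confined to. For $i\leqslant k$ one has $c\leqslant x_i\leqslant d$, so $(x_i-c)(x_i-d)\leqslant 0$ gives $x_i^2\leqslant (c+d)x_i-cd$; for $i>k$ one has $0\leqslant x_i\leqslant c$, so $x_i(x_i-c)\leqslant 0$ gives $x_i^2\leqslant c\,x_i$. Summing these and substituting $\sum_{i\leqslant k}x_i=kd-T$ and $\sum_{i>k}x_i=T$ yields
\[
\sum_{i=1}^n x_i^2 \;\leqslant\; (c+d)(kd-T)-kcd+cT \;=\; kd^2-dT,
\]
and the point of choosing $c=x_{k+1}$ (indeed any $c\in[x_{k+1},x_k]$ works) is precisely that every term involving $c$ cancels.

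Feeding this into the quadratic form gives $q(x)\leqslant kd^2-dT+(2-k)d^2=d(2d-T)$. Since $d\geqslant x_1\geqslant 0$ we have $d\geqslant 0$, and $q(x)>0$ rules out $d=0$ (which would force every $x_i=0$ and hence $q(x)=0$), so $d>0$; then $d(2d-T)\geqslant q(x)>0$ forces $2d-T>0$, i.e. $T<2d$, as required. I expect the only delicate points to be the sign bookkeeping in the cancellation above and the explicit handling of the degenerate case $d=0$.
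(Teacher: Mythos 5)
Your proof is correct, and it takes a genuinely different and considerably shorter route than the paper. The paper argues by contraposition through polyhedral geometry: it adds the constraint $x_{k+1}+\cdots+x_n\geqslant 2d$ to form a polytope $P$, reduces to checking that every vertex of $P$ has squared norm at most $(k-2)d^2$, and then enumerates the vertices by dropping triples of defining inequalities, with a page of case analysis culminating in factorizations such as $(sr-2(s+r))(rt-2(t+r))\leqslant 0$. You instead prove the pointwise bound $\sum_i x_i^2\leqslant kd^2-dT$ directly, via the two-endpoint quadratic estimates $(x_i-c)(x_i-d)\leqslant 0$ for $i\leqslant k$ and $x_i(x_i-c)\leqslant 0$ for $i>k$ with the threshold $c=x_{k+1}$, exploiting exactly the ordering hypothesis to place each coordinate in the right interval; the cancellation of all $c$-terms is the crux and it checks out. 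This yields the quantitative inequality $q(x)\leqslant d(2d-T)$, which is strictly more information than the lemma asserts (the paper's conclusion is the special case that $q(x)>0$ forces $T<2d$), and your handling of the degenerate case $d=0$ is correct. The only blemish is the loose phrasing that $T<2d$ is ``equivalent to'' the bound plus $d>0$ --- it is implied by them, not equivalent --- but the argument you actually run is sound. Your approach buys brevity, transparency, and a sharper estimate; the paper's vertex enumeration buys nothing extra here, so your argument could replace it outright.
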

\begin{proof}
The statement of the lemma can be reformulated as follows. Consider the polyhedron $P$ given by the inequalities
\[ P = \left\{ x\in\mathbb{R}^n\ \middle|\ \begin{aligned}
& d\geqslant x_1\geqslant\ldots\geqslant x_n\geqslant0, \\
& x_{k+1}+\ldots+x_n\geqslant 2d, \\
& x_1+\ldots+x_n = kd
\end{aligned} \right\}. \]
One has to show that $P$ lies in a closed ball of radius $d\sqrt{k-2}$ centered at the origin.

Note that by scaling everything down $d$ times it is sufficient to prove this statement for $d=1$. Note also that the maximal distance from $0$ over all points of $P$ is attained at one of its vertices.

Denote by $A$ the following $(n+2){\times}n$ matrix, by $b$ the following column vector in $\mathbb{R}^{n+2}$ and by $c$ the following row vector in $\mathbb{R}^n$:
\[ A = \begin{pmatrix}
1 \\
-1 & 1 \\
& -1 & 1 \\
&& \ddots & \ddots \\
&&& -1 & 1\\
&&&& -1 \\
0\ \mathrlap{\ldots} & 0 & -1 & \ldots & -1
\end{pmatrix},\quad b = \begin{pmatrix}
1 \\ 0 \\\\ \vdots \\\\ 0 \\ -2
\end{pmatrix},\quad c = (1,\ldots,1), \]
so that $P = \{ x\in\mathbb{R}^n \mid Ax\leqslant b,\ c\cdot x = k \}$.

Denote also by $A(i,j,l)$ the square matrix of the form $\begin{pmatrix}\rA_{I,*}\\c\end{pmatrix}$, 
where $\rA_{I,*}$ is the submatrix of $A$ consisting of rows with indices in $I=\{1,\ldots,n\}\setminus\{i,j,l\}$.  
Finally, denote by $b(i,j,l)=\begin{pmatrix}b_I\\k\end{pmatrix}$.

Then the vertices of $P$ are those points of $P$ which satisfy the equation
\[ A(i,j,l)x=b(i,j,l), \]
where $1\leqslant i < j < l \leqslant n+2$ are such that $A(i,j,l)$ is non-singular.

First note that $i\leqslant k$. Indeed, if $i>k$, then $A(i,j,l)$ contains the first $i$ rows of $A$. The equation coming from $A_{1,*}$ implies $x_1=1$, while the next $i-1$ equations mean that $x_1=x_2=\ldots=x_i$, so $x_1+\ldots+x_n \geqslant x_1+\ldots+x_i = i > k$.

Note also that $i,j,l$ cannot all be simultaneously $\leqslant k+1$, because there is a linear dependence between the rows from $k+2$ to $n+2$, namely,
\[ \rA_{k+2,*}+2\rA_{k+3,*}+3\rA_{k+4,*}+\ldots+(n-k)\rA_{n+1,*} = \rA_{n+2,*}. \]

Assume first that $l=n+2$. Then $j\geqslant k+1$, because otherwise $x_{k+1}=\ldots=x_n=0$ and thus $x_{k+1}+\ldots+x_n<2$. The solution is of the form
\[ x = (\underbrace{1,\ldots,1}_{i-1},\underbrace{y,\ldots,y}_{j-i},0,\ldots,0), \quad y = \frac{k+1-i}{j-i}. \]
Since $j\geqslant k+1$, the inequalities of the form $x_s\geqslant x_{s+1}$ are also satisfied. Now $x_{k+1}+\ldots+x_n=y\cdot(j-k-1)$.

Let $s=k+1-i$, so that $y=\frac{s}{j-i}$. If $x$ is a vertex, then 
\[ x_{k+1}+\ldots+x_n=y\cdot(j-k-1) \geqslant 2, \]
or, equivalently,
\[ \frac{s}{j-i}(j-i-s) \geqslant 2. \]
The squared distance from the origin to $x$ equals
\begin{multline*}
x_1^2+\ldots+x_n^2 = (i-1)+(j-i)y^2 = (i-1)+\frac{(k+1-i)^2}{j-i} = \\ 
 = k-s+\frac{s^2}{j-i} = k+\frac{s}{j-i}(s+i-j) \leqslant k-2.
\end{multline*}

Now assume that $l<n+2$. Then the solution is of the form
\[ x = (\underbrace{1,\ldots,1}_{i-1}, \underbrace{y_1,\ldots,y_1}_{j-i}, \underbrace{y_2,\ldots,y_2}_{l-j}, 0,\ldots,0). \]
The values of $y_1$ and $y_2$ are subject to the following two equations. The first one is
\[ x_1+\ldots+x_n=k, \quad\text{i.e.}\quad (i-1)+y_1\cdot(j-i)+y_2\cdot(l-j) = k. \]
The form that the equation $x_{k+1}+\ldots+x_n=2$ takes depends on $j$.

If $j\leqslant k+1$, the second equation is $y_2\cdot(l-k-1) = 2$. In this case
\[ y_2 = \frac{2}{l-k-1},\quad y_1 = \frac{(k+1-i)(l-k-1)+2(j-l)}{(l-k-1)(j-i)}. \]
Write 
\begin{gather*}
s = k+1-i, \quad t = k+1-j, \quad r=l-k-1, \\
\shortintertext{so that}
y_1 = \frac{sr-2(t+r)}{r(s-t)}, \quad y_2 = \frac{2}{r}.
\end{gather*}
The inequality $y_2\leqslant y_1$ can be reformulated as $2(s+r)\leqslant sr$, while the inequality $y_1\leqslant 1$ means $rt \leqslant 2(t+r)$. Now
\[ x_1^2+\ldots+x_n^2 = k-s + (s-t)\frac{(sr-2(t+r))^2}{r^2(s-t)^2} + (t+r)\frac{4}{r^2}. \]
This sum being not greater than $k-2$ can be expressed as
\[ 2-s + \frac{(sr-2(t+r))^2}{r^2(s-t)} + \frac{4(t+r)}{r^2} \leqslant 0, \]
or, multiplying by $r^2(s-t)$,
\[ (2-s)(s-t)r^2 + (sr-2(t+r))^2 + 4(t+r)(s-t) \leqslant 0. \]
%2-s+\frac{s^2r^2-4sr(t+r)+4(t+r)^2+4(t+r)(s-t))}{r^2(s-t)} = 2-s+\frac{s^2}{s-t}+\frac{4(r+t)(s+r-sr)}{r^2(s-t)}
But the left-hand side can be rewritten as
\[ (sr-2(s+r))\cdot(rt-2(t+r)), \]
which is non-positive.

If $j>k+1$, the second equation becomes $y_1\cdot(j-k-1)+y_2\cdot(l-j)=2$. This system of equations is equivalent to
\begin{gather*}
\begin{pmatrix} j-i & l-j \\ i-k-1 & 0 \end{pmatrix}
\begin{pmatrix} y_1 \\ y_2 \end{pmatrix}
=
\begin{pmatrix} k+1-i \\ i-k+1 \end{pmatrix}, \\
\shortintertext{thus}
y_1 = \frac{i-k+1}{i-k-1},\quad y_2 = \frac{(k+1-i)(i-k-1)+(i-j)(i-k+1)}{(i-k-1)(l-j)}.
\end{gather*}
Again, write 
\begin{gather*}
s = k+1-i, \quad t = j-k-1, \quad r=l-k-1, \\
\shortintertext{so that}
y_1 = \frac{s-2}{s}, \quad y_2 = \frac{s^2+(s+t)(2-s)}{s(r-t)} = \frac{2(s+t)-st}{s(r-t)}.
\end{gather*}
The inequalities $0\leqslant y_2\leqslant y_1$ are equivalent to $st\leqslant 2(s+t)$ and $sr\geqslant 2(s+r)$. Now
\[ x_1^2+\ldots+x_n^2 = k-s + (s+t)\frac{(s-2)^2}{s^2} + (r-t)\frac{(2(s+t)-st)^2}{s^2(r-t)^2}, \]
and this sum being not greater than $k-2$ is equivalent to
\[ (2-s)(r-t)s^2 + (s+t)(s-2)^2(r-t) + (2(s+t)-st)^2 \leqslant 0. \]
The left-hand side can be rewritten as
\[ (st-2(s+t))\cdot(sr-2(s+r)), \]
which is non-positive.
\end{proof}

Denote by $\dec(x)\in\mathbb{Z}\Delta$ the permutation of entries of $x=(x_1,\ldots,x_n)^\top$ such that $x_1\geqslant x_2\geqslant\ldots\geqslant x_n$.
\begin{corollary} \label{cor:degree reduced for real roots}
If $x \in\Delta_{\mathrm{re}}^+$ is such that $x=\dec(x)$, then $\deg(s_\beta x) < \deg(x)$.
\end{corollary}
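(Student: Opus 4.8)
The plan is to deduce the corollary directly from \cref{lemma:w reduces degree}, which does all the substantial work. Recall that $\deg(s_\beta x)=\deg(x)+r$ with $r=x_{k+1}+\ldots+x_n-2\deg(x)$. Hence $\deg(s_\beta x)<\deg(x)$ is equivalent to $r<0$, i.e.\ to the inequality $x_{k+1}+\ldots+x_n<2d$, where $d=\deg(x)$. This is exactly the conclusion of \cref{lemma:w reduces degree}, so it suffices to check that $x$ and $d$ satisfy the three hypotheses of that lemma.

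Before doing so, I would first observe that $d\geqslant 1$. Indeed, a positive root of degree $0$ has the form $e_i-e_j$ with $i>j$ by \cref{remark:degree 0 roots}, so its positive entry sits at a larger index than its negative entry; rearranging its entries in decreasing order therefore changes it. Thus the assumption $x=\dec(x)$ rules out degree $0$, and since degrees are non-negative integers we get $d\geqslant 1$. This is the only point that needs care, because the applicability of \cref{lemma:0<x<d is preserved} below requires $d\geqslant 1$.

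With $d\geqslant 1$ in hand, verifying the hypotheses is routine. The chain $d\geqslant x_1\geqslant\ldots\geqslant x_n\geqslant 0$ combines the assumption $x=\dec(x)$, which gives $x_1\geqslant\ldots\geqslant x_n$, with \cref{lemma:0<x<d is preserved}, which, since $x\in\Delta_{\mathrm{re}}^+$ and $d\geqslant 1$, gives $0\leqslant x_i\leqslant d$. The identity $x_1+\ldots+x_n=kd$ is just the definition of the degree, and $q(x)=\sum x_i^2+(2-k)d^2=2>0$ since $x$ is a real root. Applying \cref{lemma:w reduces degree} then yields $x_{k+1}+\ldots+x_n<2d$, i.e.\ $r<0$, and therefore $\deg(s_\beta x)=d+r<d=\deg(x)$.

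I do not expect a genuine obstacle here: the delicate vertex-by-vertex estimate is already contained in \cref{lemma:w reduces degree}, and the present statement is essentially its translation into the language of the degree change under $s_\beta$. The only substantive observation is the reduction to that lemma together with the preliminary remark that $x=\dec(x)$ forces positive degree.
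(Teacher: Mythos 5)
Your proof is correct and follows essentially the same route as the paper: the corollary is a direct translation of \cref{lemma:w reduces degree}, whose hypotheses are verified via $x=\dec(x)$, \cref{lemma:0<x<d is preserved}, and $q(x)=2$. Your extra observation that $x=\dec(x)$ forces $\deg(x)\geqslant 1$ (needed to invoke \cref{lemma:0<x<d is preserved}) is a point the paper leaves implicit, but it does not change the argument.
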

\begin{proof}
The entries of the real root $x = (x_1,\ldots,x_n)^\top$ satisfy the assumptions of the 
Lemma~\ref{lemma:w reduces degree}, 
so $\deg(s_\beta x) = \deg(x) + x_{k+1} + \ldots x_n - 2\deg(x) < \deg(x)$.
\end{proof}

\maintheoremrestate
%\begin{theorem} \label{theorem:positive real roots of degree d}
%The vector 
%$x = (x_1,\ldots,x_n)^\top \in\mathbb{Z}\Delta$ is a positive real root of degree $\geqslant 1$ if and only if
%\begin{enumerate}
%\item $0 \leqslant x_i \leqslant \deg(x)$ for all $i=1,\ldots,n$, \label{item:0<x<d}
%\item $q(x) = 2$, \label{item:q(x)=2}
%\item repeated application of $x\longmapsto w_\beta(\dec(x))$ preserves property (\ref{item:0<x<d}) until it changes the sign of all entries of $x$. \label{item:w(dec(x))}
%\end{enumerate}
%\end{theorem}
\begin{proof}
For any real root $x$ with $\deg(x)\geqslant1$, all three properties are satisfied, because they are 
satisfied for $\beta=(1,\ldots,1,0,\ldots,0)^\top$ and are preserved by the operation $x\longmapsto s_\beta(\dec(x))$ by \cref{lemma:0<x<d is preserved,lemma:q is invariant,cor:degree reduced for real roots}.

Now assume that $x\in\mathbb{Z}\Delta$ satisfies these three properties, and consider the sequence
\[ 
\big(x^{(i)}\big)_{i\in\mathbb{Z}_{\geqslant0}} \quad \text{with} \quad x^{(0)} = x, \quad x^{(i+1)} = s_\beta\big(\dec(x^{(i)})\big). 
\]
By \cref{lemma:w reduces degree} $\deg(x^{(i+1)}) < \deg(x^{(i)})$. Denote by $m$ the smallest index such that $x^{(m)}$ has non-negative entries but $x^{(m+1)}$ only has non-positive entries. 
Then $\dec(x^{(m)})$ is of the form $(y_1,\ldots,y_k,0,\ldots,0)$ for some non-negative $y_i$, because $s_\beta$ only affects the first $k$ entries and must change the signs of all entries by property~(\ref{item:w(dec(x))}).

Denote $d=\deg(x^{(m)})$, so that by property~(\ref{item:w(dec(x))}) $y_i\leqslant d$. Now since $(y_1+\ldots+y_k)/k=d$, it follows that $y_i=d$ and $\dec(x^{(m)}) = (d,\ldots,d,0,\ldots,0)^\top$. But then
\[ q\big(\dec(x^{(m)})\big) = kd^2+(2-k)d^2 = 2d^2, \]
so by property~(\ref{item:q(x)=2}) $d=1$ and $\dec(x^{(m)}) = \beta$ (and $x^{(m+1)}=-\beta$). This implies that $x \in W\beta$ and hence it is a real root.
\end{proof}
\begin{remark}
It follows from the proof of the above theorem that property~(\ref{item:w(dec(x))}) can be replaced 
by the following: repeated application of $x\longmapsto s_\beta(\dec(x))$ leads to $-\beta$, and it does so in at most $\deg(x)$ steps (in particular, for a real root $x$ of degree $1$ one has $\dec(x)=\beta$ and $s_\beta(\dec(x))=-\beta$).
\end{remark}
\begin{remark}
The process described above also works for the elements of the root lattice close to the real roots. In particular, it allows to distinguish non-roots admitting a sequence of height-lowering simple reflections, see~\cref{sec:almost real roots}. The latter are related to the indecomposable modules appearing in the categorification of Grassmannian cluster algebras, see~\cref{sec:cluster algebras}.
\end{remark}

\section{Symmetries and embeddings} \label{sec:symmetries and embeddings}
There is a natural correspondence between $\rJ_{k,n}$ and $\rJ_{n-k,n}$ as their graphs are isomorphic, 
i.e. they have the same root system, with a different ordering of the simple roots.
\begin{remark}\label{remark:dual-root-system}
If $x = (x_1,\ldots,x_n)^\top$ is an element of the root lattice $\rJ_{k,n}$, 
then the corresponding element of the root lattice $\rJ_{n-k,n}$ is 
$x' = (d-x_n,\ldots,d-x_1)^\top$, where $d=\deg(x)$.
\end{remark}
\begin{proof}
This correspondence is linear, maps simple roots to simple roots in symmetric positions and preserves the quadratic form:
\begin{align*}
q_{n-k,k}(x') & = \sum(d-x_i)^2+\big(2-(n-k)\big)d^2 \\
& = nd^2-2d\cdot\sum x_i + \sum x_i^2 + (2-n+k)d^2 \\
& = \sum x_i^2 - 2kd^2 + (2+k)d^2 = q_{k,n}(x). \qedhere
\end{align*}
\end{proof}

The root system $\rJ_{k,n}$ can be considered as a subsystem of both $\rJ_{k,n+1}$ and $\rJ_{k+1,n+1}$ in the natural way, meaning that the branch node of the tree is mapped to the branch point of the larger graph. In terms of the $\alpha_i, \beta$, we can consider any positive root for a larger system containing it. The next remark explains how the subsystem arises in terms of the $x_i's$.

\begin{remark}\label{remark:root-system-extension}
If $x = (x_1,\ldots,x_n)^\top$ is an element of the root lattice $\rJ_{k,n}$ of degree $d$, then 
the corresponding elements of the root latices $\rJ_{k,n+1}$ and $\rJ_{k+1,n+1}$ are
\[ 
(x_1,\ldots,x_n,0)^\top\quad \text{and} \quad (d,x_1,\ldots,x_n)^\top, 
\]
respectively. Note that both have degree $d$ in their respective root lattice.
\end{remark}
\begin{proof}
Both correspondences are linear, map simple roots to the respective simple roots and preserve the quadratic form:
\[ d^2+\sum x_i^2 + \big(2-(k+1)\big)d^2 = \sum x_i^2 + (2-k)d^2. \qedhere \]
\end{proof}
In particular, iterating the above, this provides a description of
the infinite rank root system $\rJ_{\infty,\infty}$ as a set of $\mathbb{Z}$-indexed sequences $(x_i)_{i\in\mathbb{Z}}$ of integers such that there 
exists $M\in\mathbb{N}$ such that
\begin{enumerate}
\item $(x_{-M},\ldots,x_M)^\top$ is an element of $\rJ_{M,2M+1}$ of degree $d$,
\item $x_{M+1} = x_{M+2} = \ldots = 0$,
\item $x_{-M-1} = x_{-M-2} = \ldots = d$.
\end{enumerate}
Note that the particular choice of such $M$ does not change the degree $d$. Indeed, if $x_{-M}+\ldots+x_M=Md$, then $x_{-M-m}+\ldots+x_{M+m} = (M+m)d$.

This naturally extend to the description of the inifinite rank root lattice $\mathbb{Z}\rJ_{\infty,\infty}$. It is equipped with the inner product $q$ defined as the value of $q$ on $\mathbb{Z}\rJ_{M,2M+1}$ for a suitable $M$.

We also define $\dec(x)$ as follows: for $(x_i)_{i\in\mathbb{Z}}\in\mathbb{Z}\rJ_{\infty,\infty}$ with $x_i\geqslant 0$ and of degree $d$ denote $x' = \dec\left( (x_{-M},\ldots,x_{M})^\top \right)\in\mathbb{Z}\rJ_{M,2M+1}$ and set $\dec(x)$ to be the image of $x'$ in $\mathbb{Z}\rJ_{\infty,\infty}$. Note that if $x$ is a real root of $\rJ_{\infty,\infty}$, then so is $\dec(x)$.

For every $x\in\mathbb{Z}\rJ_{\infty,\infty}$ there exist the smallest $k,n$ such that $x$ comes from an element of $\mathbb{Z}\rJ_{k,n}$ by means of \cref{remark:root-system-extension}. Namely, $k$ is the smallest natural number such that $x_{-k-1}=x_{-k-2}=\ldots=\deg(x)$, while $n$ is the smallest such that $x_{n-k}=x_{n-k+1}=\ldots=0$.

\section{Enumeration of roots} \label{sec:enumeration of roots}
\subsection{Real roots}
In order to enumerate roots, we can use the action of the $\rA_{n-1}$ type subsystem in $\rJ_{k,n}$ as 
explained in the following remark.

\begin{remark}\label{remark:W-orbits}
The Weyl group $W(\rA_{n-1})\cong S_n$ of the root subsystem 
$\langle \alpha_1,\ldots,\alpha_{n-1}\rangle$ of type $\rA_{n-1}$ acts on the root lattice by permutations on the entries of $x$ while keeping the degree. 
Thus the enumeration of roots reduces to the enumeration of the orbits of this action on the roots of 
each degree. This will be our strategy in this section.
In each such orbit we choose one representative which is ordered. This representative has 
the smallest height over its orbit, and since the support of a root is connected, this root belongs to a 
particular natural subsystem of the smallest rank.
\end{remark}

All orbits of real roots of degrees up to $5$ (assuming $k$ and $n$ are large enough) are listed in \cref{table:real-roots}, in the coordinates $(x_i)_i$.
\begin{table}[h]
\begin{tabular}{l@{\qquad}l@{\qquad}l} \toprule
degree $1$ & \multicolumn{2}{l}{\hspace{-1ex}$\ubr{1\ldots1}_{k}$ (\ref{eq:root-type-E39-1}\textsuperscript{$+$},\ref{eq:root-type-E39-2}\textsuperscript{$-$},\ref{eq:root-type-E69-1}\textsuperscript{$+$},\ref{eq:root-type-E69-2}\textsuperscript{$-$},\ref{eq:root-type-E48-1}\textsuperscript{$\pm$},\ref{eq:root-type-three},\ref{eq:root-type-sym})}\\ \midrule
degree $2$ & \multicolumn{2}{l}{\hspace{-1ex}$\ubr{2\ldots2}_{k-3}\ubr{1\ldots1}_{6}$ \quad (\ref{eq:root-type-E39-1}\textsuperscript{$-$},\ref{eq:root-type-E39-2}\textsuperscript{$+$},\ref{eq:root-type-E69-1}\textsuperscript{$-$},\ref{eq:root-type-E69-2}\textsuperscript{$+$},\ref{eq:root-type-E48-0}\textsuperscript{$\pm$},\ref{eq:root-type-E48-2}\textsuperscript{$\pm$},\ref{eq:root-type-three},\ref{eq:root-type-sym})} \\ \midrule
degree $3$ &
\multicolumn{2}{l}{\hspace{-1ex}$\ubr{3\ldots3}_{k-3}\ubr{2}_{1}\ubr{1\ldots1}_{7}$ (\ref{eq:root-type-E39-0}\textsuperscript{$\pm$},\ref{eq:root-type-E39-3}\textsuperscript{$\pm$},\ref{eq:root-type-three})} \\ &
\multicolumn{2}{l}{\hspace{-1ex}$\ubr{3\ldots3}_{k-4}\ubr{2\ldots2}_{4}\ubr{1\ldots1}_{4}$ (\ref{eq:root-type-E48-1}\textsuperscript{$\pm$},\ref{eq:root-type-sym})} \\ &
\multicolumn{2}{l}{\hspace{-1ex}$\ubr{3\ldots3}_{k-5}\ubr{2\ldots2}_{7}\ubr{1}_{1}$ (\ref{eq:root-type-E69-0}\textsuperscript{$\pm$},\ref{eq:root-type-E69-3}\textsuperscript{$\pm$},\ref{eq:root-type-three}')} \\ \midrule
degree $4$ &
$\ubr{4\ldots4}_{k-3}\ubr{3}_{1}\ubr{1\ldots1}_{9}$ (\ref{eq:root-type-three}) &
$\ubr{4\ldots4}_{k-3}\ubr{222}_{3}\ubr{1\ldots1}_{6}$ (\ref{eq:root-type-E39-1}\textsuperscript{$+$},\ref{eq:root-type-E39-2}\textsuperscript{$-$}) \\ &
$\ubr{4\ldots4}_{k-4}\ubr{33}_{2}\ubr{222}_{3}\ubr{1\ldots1}_{4}$ &
$\ubr{4\ldots4}_{k-4}\ubr{3}_{1}\ubr{2\ldots2}_{6}\ubr{1}_{1}$ (\ref{eq:root-type-E48-0}\textsuperscript{$\pm$},\ref{eq:root-type-E48-2}\textsuperscript{$\pm$}) \\ &
$\ubr{4\ldots4}_{k-5}\ubr{3\ldots3}_{5}\ubr{1\ldots1}_{5}$ (\ref{eq:root-type-sym}) &
$\ubr{4\ldots4}_{k-5}\ubr{3\ldots3}_{4}\ubr{222}_{3}\ubr{11}_{2}$ \\ &
$\ubr{4\ldots4}_{k-6}\ubr{3\ldots3}_{6}\ubr{222}_{3}$ (\ref{eq:root-type-E69-1}\textsuperscript{$+$},\ref{eq:root-type-E69-2}\textsuperscript{$-$}) &
$\ubr{4\ldots4}_{k-7}\ubr{3\ldots3}_{9}\ubr{1}_{1}$ (\ref{eq:root-type-three}') \\ \midrule
degree $5$ &
$\ubr{5\ldots5}_{k-3}\ubr{4}_{1}\ubr{1\ldots1}_{11}$ (\ref{eq:root-type-three}) &
$\ubr{5\ldots5}_{k-3}\ubr{3}_{1}\ubr{222}_{3}\ubr{1\ldots1}_{6}$ \\ &
$\ubr{5\ldots5}_{k-3}\ubr{2\ldots2}_{6}\ubr{111}_{3}$ (\ref{eq:root-type-E39-1}\textsuperscript{$-$},\ref{eq:root-type-E39-2}\textsuperscript{$+$}) &
$\ubr{5\ldots5}_{k-4}\ubr{44}_{2}\ubr{2\ldots2}_{4}\ubr{1\ldots1}_{4}$ \\ &
$\ubr{5\ldots5}_{k-4}\ubr{4}_{1}\ubr{333}_{3}\ubr{2}_{1}\ubr{1\ldots1}_{5}$ &
$\ubr{5\ldots5}_{k-4}\ubr{4}_{1}\ubr{33}_{2}\ubr{2\ldots2}_{4}\ubr{11}_{2}$ \\ &
$\ubr{5\ldots5}_{k-4}\ubr{3\ldots3}_{5}\ubr{2}_{1}\ubr{111}_{3}$ &
$\ubr{5\ldots5}_{k-4}\ubr{3\ldots3}_{4}\ubr{2\ldots2}_{4}$ (\ref{eq:root-type-E48-1}\textsuperscript{$\pm$}) \\ &
$\ubr{5\ldots5}_{k-5}\ubr{444}_{3}\ubr{33}_{2}\ubr{22}_{2}\ubr{111}_{3}$ &
$\ubr{5\ldots5}_{k-5}\ubr{444}_{3}\ubr{3}_{1}\ubr{2\ldots2}_{5}$ \\ &
$\ubr{5\ldots5}_{k-5}\ubr{44}_{2}\ubr{3\ldots3}_{4}\ubr{22}_{2}\ubr{1}_{1}$ &
$\ubr{5\ldots5}_{k-6}\ubr{4\ldots4}_{6}\ubr{1\ldots1}_{6}$ (\ref{eq:root-type-sym}) \\ &
$\ubr{5\ldots5}_{k-6}\ubr{4\ldots4}_{5}\ubr{3}_{1}\ubr{222}_{3}\ubr{1}_{1}$ &
$\ubr{5\ldots5}_{k-6}\ubr{4\ldots4}_{4}\ubr{3\ldots3}_{4}\ubr{11}_{2}$ \\ &
$\ubr{5\ldots5}_{k-6}\ubr{444}_{3}\ubr{3\ldots3}_{6}$ (\ref{eq:root-type-E69-1}\textsuperscript{$-$},\ref{eq:root-type-E69-2}\textsuperscript{$+$}) &
$\ubr{5\ldots5}_{k-7}\ubr{4\ldots4}_{6}\ubr{333}_{3}\ubr{2}_{1}$ \\ &
$\ubr{5\ldots5}_{k-9}\ubr{4\ldots4}_{11}\ubr{1}_{1}$ (\ref{eq:root-type-three}') \\
\bottomrule
\end{tabular}
\caption{$W(\rA_{n-1})$-orbits of real roots of degrees $1$ to $5$, presented in the $x_i$. 
$0$'s are omitted. Some orbits are marked by the families of roots, see below.}
\label{table:real-roots}
\end{table}
The completeness of this table is justified by the following lemma. 

\begin{lemma} \label{lemma:bound on the minimal support of a roots orbit}
If $x\in\mathbb{Z}\rJ_{\infty,\infty}$ has degree $d\geqslant1$, all its entries are non-negative, $q(x)>0$ and $x=\dec(x)$, then it is a root lattice element of the natural $\rJ_{2d-1,4d-2}$ subsystem.
\end{lemma}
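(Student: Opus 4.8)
The plan is to reduce the statement to a single short inequality about the multiset of entries of $x$, in which $q(x)>0$ is the only active hypothesis. Since $x=\dec(x)$ is arranged in non-increasing order and its far-left entries all equal $\deg(x)=d$ (the $d$'s prepended under the embeddings of \cref{remark:root-system-extension}), every entry satisfies $0\leqslant x_i\leqslant d$. Hence the sequence is an infinite block of $d$'s, then finitely many entries with values in $\{1,\ldots,d-1\}$, then an infinite block of $0$'s. Writing $c_j$ for the number of entries equal to $j$ (for $1\leqslant j\leqslant d-1$), I set $C=\sum_j c_j$, $S=\sum_j j\,c_j$ and $Q=\sum_j j^2 c_j$.

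First I would locate the two infinite blocks. In a symmetric window $\rJ_{M,2M+1}$ the identity $Md=\sum_{i=-M}^{M}x_i$ forces exactly $M-S/d$ of the window entries to equal $d$, so the rightmost $d$ occupies a position independent of $M$; the same computation pins down the leftmost $0$. Reading these off (via the recipe for the smallest $k,n$ at the end of \cref{sec:symmetries and embeddings}) gives, when $C\geqslant1$, the minimal parameters $k_{\min}=S/d$ and $n_{\min}=C$, with the window of this minimal representation consisting of exactly the middle entries. In particular $S/d$ is a positive integer and $q(x)=Q+(2-k_{\min})d^2$. (The degenerate case $C=0$, where $x$ is a block of $d$'s followed by $0$'s and lands in $\rJ_{1,2}$, is checked directly.)

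Then I would substitute into $q(x)>0$. This reads $Q+(2-S/d)d^2>0$, i.e. $Sd-Q=\sum_{j=1}^{d-1} j(d-j)\,c_j<2d^2$. Because $1\leqslant j\leqslant d-1$ gives both $j\geqslant1$ and $d-j\geqslant1$, each summand $j(d-j)c_j$ dominates $j\,c_j$ and also dominates $(d-j)\,c_j$. Summing the first comparison yields $S<2d^2$, hence $k_{\min}=S/d<2d$; summing the second yields $\sum_j(d-j)c_j<2d^2$, hence $n_{\min}-k_{\min}=\frac{1}{d}\sum_j(d-j)c_j<2d$. As both $k_{\min}$ and $n_{\min}-k_{\min}$ are integers, $k_{\min}\leqslant 2d-1$ and $n_{\min}-k_{\min}\leqslant 2d-1$. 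Since $\rJ_{2d-1,4d-2}$ has exactly $k=2d-1$ and $n-k=2d-1$, one reaches it from $\rJ_{k_{\min},n_{\min}}$ by prepending $d$'s and appending $0$'s, so $x$ lies in this subsystem.

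I expect the main obstacle to be the middle step — correctly extracting $k_{\min}$ and $n_{\min}$ from the entry-multiset, including the anchoring of the $\mathbb{Z}$-indexing and the degenerate case $C=0$ — rather than the final estimate, which is immediate once $k_{\min}=S/d$ and $n_{\min}=C$ are available. It is worth noting that the perfect symmetry of the two bounds is not an accident: it reflects the self-duality $\rJ_{2d-1,4d-2}\cong\rJ_{2d-1,4d-2}$ of \cref{remark:dual-root-system}, whose map $x\mapsto(d-x_n,\ldots,d-x_1)$ interchanges the value $j$ with $d-j$ and therefore swaps the roles of $S$ and $\sum_j(d-j)c_j$.
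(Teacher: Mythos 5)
Your proposal is correct, and it reaches the bound by a genuinely different route from the paper. The paper works in the simple-root coordinates: it writes $x=m_\beta\beta+\sum m_i\alpha_i$ in the minimal subsystem, observes that $m_k=x_{k+1}+\ldots+x_n$ is at most $2d-1$ by invoking \cref{lemma:w reduces degree} (the technical polyhedral lemma), and then uses that $m_1<\cdots<m_k$ and $m_k>\cdots>m_{n-1}$ are strictly monotone sequences of positive integers to conclude $k,\,n-k\leqslant m_k\leqslant 2d-1$. You instead stay entirely in the $e_i$-coordinates: after correctly identifying $k_{\min}=S/d$ and $n_{\min}=C$ from the entry multiset (your anchoring of the $\mathbb{Z}$-indexing and the observation that the minimal window contains no entries equal to $d$ or $0$ both check out, as does the degenerate case $C=0$), you rewrite $q(x)>0$ as $\sum_{j}j(d-j)c_j<2d^2$ and extract both bounds from the elementary inequalities $j(d-j)\geqslant j$ and $j(d-j)\geqslant d-j$. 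The net effect is that your proof of this lemma is self-contained and avoids \cref{lemma:w reduces degree} altogether, which is a real simplification for this particular statement; the paper's version, by contrast, reuses machinery it needs anyway for \cref{theorem:positive real roots of degree d} and makes the strict unimodality of the coefficients $m_i$ explicit, which it also exploits in the enumeration remarks that follow. Your closing observation that the symmetry of the two bounds reflects the duality of \cref{remark:dual-root-system} is a nice touch not present in the paper.
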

\begin{proof}
There are minimal $k,n$ such that 
$x$ is a root lattice element of a natural $\rJ_{k,n}$ subsystem. Write $x$ in terms of the simple roots for this 
subsystem.
Denote by $m_i$ the coefficient of $\alpha_i$ in $x$, so that $x = m_\beta\beta+m_1\alpha_1+\ldots+m_{n-1}\alpha_{n-1}$. 
Then $m_k = (VDUx)_{k+1} = x_{k+1}+\ldots+x_n$, which by \cref{lemma:w reduces degree} is at 
most $2d-1$.
It follows that $m_1,\ldots,m_k$ is a strictly increasing sequence, while $m_k,\ldots,m_{n-1}$ is strictly decreasing. Thus $k,n-k\leqslant 2d-1$, so $n\leqslant4d-2$.
\end{proof}

The above lemma gives a tool to enumerate the real roots of degree $\leqslant d$ as follows:
\begin{enumerate}
\item enumerate all length $4d-2$ decreasing sequences of numbers from $\{0,1,\ldots,d\}$ such that the sum of entries is divisible by $2d-1$;
\item for each such sequence check whether $q$ evaluates to $2$;
\item if it does, perform the procedure of \cref{theorem:positive real roots of degree d} to establish whether this sequence correspond to a real root.
\end{enumerate}
\begin{remark}
Since $x=\dec(x)$, the sequences $m_1,\ldots,m_k$ and $m_k,\ldots,m_{n-1}$ are convex in the sense that $2m_i\leqslant m_{i-1}+m_{i+1}$ for $i=2,\ldots,k-1$ and $k+1,\ldots,n-2$.
\end{remark}
\begin{remark}
Experimental evidence suggests that in fact such $x$ is an element of a natural $\rJ_{k,n}$ subsystem for some $n\leqslant 2d+2$ and some $k$. The roots $\gamma_d$ and $\delta_d$ (see below) display the extreme cases with $n=2d+2$ and $k=3$ and $k=d+1$ respectively.
\end{remark}

The rest of \cref{sec:enumeration of roots} is devoted to the description of particular series of roots. We will show that in terms of the $e_i$'s even the structure of finite type root systems is more transparent.

We start with marking two distinguished families of roots, one in each degree $d\geqslant2$:
\begin{align*}
\gamma_d & = \ubr{d\ldots d}_{k-3}\ubr{d-1}_{1}\ubr{1\ldots1}_{2d+1}, \label{eq:root-type-three}\tag{D} \\
\delta_d & = \ubr{d\ldots d}_{k-d-1}\ubr{d-1\ldots d-1}_{d+1}\ubr{1\ldots1}_{d+1}, \label{eq:root-type-sym}\tag{E}
\end{align*}
corresponding respectively to
\[ \gamma_d = \eroot{1 & d & 2d-1 & 2d-2 & \cdots & 2 & 1 \\ && d} \quad \text{and} \quad \delta_d = \eroot{1 & 2 & \cdots & d & d+1 & d & \cdots & 2 & 1 \\ &&&& d}. \]
The family of roots dual to $\gamma_d$ (see \cref{remark:dual-root-system}) are marked by (\ref{eq:root-type-three}') in \cref{table:real-roots}.

Among the roots of these series are $\gamma_2 = \delta_2$, the maximal root of the natural $\rE_6$ subsystem, and $\delta_3=\beta+\delta_{4,8}\in \rJ_{4,8}$, see \cref{sec:affine roots}. Their inners products are
\[ (\gamma_d, \gamma_{d'}) = 2 - |d-d'|, \qquad (\delta_d, \delta_{d'}) = 2 - |d-d'|, \qquad (\gamma_d, \delta_d) = d(3-d). \]
To see that they are indeed real roots note that the sum of last $n-k$ entries of $\gamma_d$ equals $2d-1$, thus $(\gamma_d,\beta) = 1$ and
\[ s_\beta(\gamma_d) = \ubr{d-1\ldots d-1}_{k-3}\ubr{d-2}_{1}\ubr{00}_{2}\ubr{1\ldots1}_{2d-1} \in W(\rA_{n-1})\gamma_{d-1}. \]
Similarly, the sum of the last $n-k$ entries of $\delta_d$ equals $d+1$, so $(\delta_d,\beta) = d-1$ and
\[ s_\beta(\delta_d) = \ubr{1\ldots1}_{k-d-1}\ubr{0\ldots0}_{d+1}\ubr{1\ldots1}_{d+1} \in W(\rA_{n-1})\beta. \]

\subsection{Root systems of finite types} \label{sec:systems of finite types}
Let us now consider the case of finite root systems $\rJ_{3,n}=\rE_n$, $n=6,7,8$. It can be seen 
in \cref{table:real-roots} that in $\rE_6$ and $\rE_7$ there are no degree $3$ roots (as in 
these cases, $\sum x_i < 3k$). 
In $\rE_6$ there is a single degree $2$ root $(1,1,1,1,1,1)^\top$, which is the maximal root 
$\gamma_2$. In $\rE_7$ there are $7$ such roots, all conjugate under $W(\rA_6)$ to 
the image of $\gamma_2$ in $\rE_7$
and of the form $\dec(x)=(1,1,1,1,1,1,0)^\top$. In $\rE_8$ there is a single $W(\rA_7)$-orbit of degree $3$ roots, of the form $\dec(x) = \gamma_3 = (2,1,1,1,1,1,1,1)^\top$.

In \cref{fig:weight diagram E6 adjoint} the positive roots of $\rE_6$ are displayed by means of the weight diagram of its adjoint representation. The weight diagram of a representation is a graph with vertices corresponding to the weights of the representation (with multiplicities). An edge labeled $i$ joins the weights $\lambda$ and $\mu$ if $\lambda-\mu=\pm\alpha_i$. The weights of the adjoint representation are the roots of the root system together with zero weights corresponding to the simple roots. To determine the root $\alpha$ corresponding to a given vertex one can find a path joining this vertex to a zero weight and going from left to right. Then $\alpha = \sum \alpha_i$, where sum is taken over all labels $i$ occuring in this path. For more details concerning weight diagrams see~\cite{PSV}.
\begin{figure}
\includegraphics{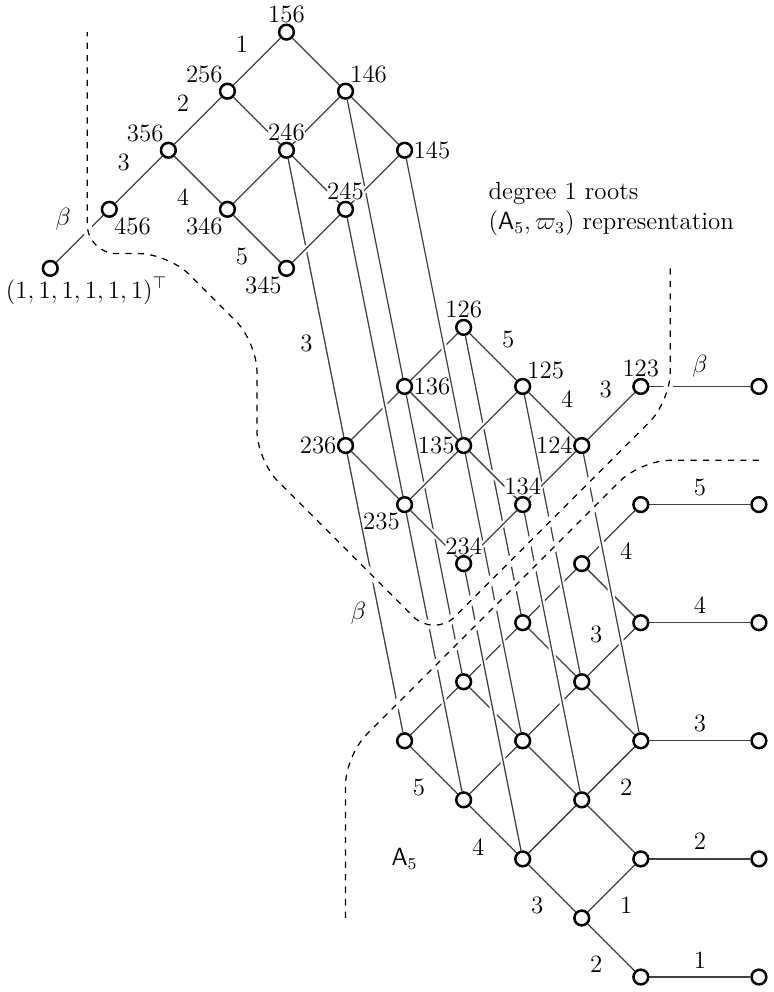}
\caption{Weight diagram of the adjoint representation of $\rE_6$. Only the part corresponding to the 
positive roots and to zero weights is shown. The rightmost vertices correspond to the zero weights 
numbered by the simple roots, and all other vertices correspond to positive roots of 
$\rE_6$. 
In any square (parallelogram) the labels on opposide sides coincide and thus some of them are 
omitted.
%The labels on the opposide sides of squares coincide, and thus some are omitted. 
The vertices of degree $1$ roots are labeled by $abc$, meaning that the corresponding roots is $e_a+e_b+e_c$. 
The leftmost vertex corresponds to $\gamma_2=e_1+e_2+\dots+e_6$.}
\label{fig:weight diagram E6 adjoint}
\end{figure}

Another instance where the $e_i$ basis reveals more symmetry is the expression for the fundamental 
weights. Recall that by definition fundamental weights (of a simply-laced root system) form the basis 
dual to the 
basis of the fundamental simple roots. The expansion of the fundamental weights in terms of the 
simple roots can be obtained by taking the columns of the matrix $A^{-1}$, the inverse 
of the Cartan matrix. Thus the expressions in terms of the $e_i$ basis can be calculated as the columns 
of $CA^{-1}$.

In case $k=2$ (so that $\rJ_{k,n}=\rD_n$) this gives (after the renumbering of the simple
roots, see \cref{example:dn}) the standard description~\cite[Ch.~VI, \S4, no.~8(VI)]{Bou1}
\begin{align*}
& \varpi_\beta = \frac{1}{2}(1,\ldots,1)^\top, \\
& \varpi_1 = \frac{1}{2}(-1,1,\ldots,1)^\top, \\
& \varpi_i = e_{i+1}+\ldots+e_n \quad \text{for} \quad 2\leqslant i\leqslant n-1.
\end{align*}
The fundamental weights for $\rE_6,\rE_7$ and $\rE_8$ are listed in \cref{table:fundamental weights E6,table:fundamental weights E7,table:fundamental weights E8}.

Similarly, the sum of all positive roots, which equals twice the sum of the fundamental weights, is
\begin{align*}
\rD_n &\colon \quad (0,1,\ldots,n-1)^\top, \\
\rE_6 &\colon \quad (3,4,5,6,7,8)^\top, \\
\rE_7 &\colon \quad (15,17,19,21,23,25,27)^\top, \\
\rE_8 &\colon \quad (22,23,24,25,26,27,28,29)^\top.
\end{align*}

\begin{table}
\begin{tabular}{l@{\qquad}c@{\qquad}c} \toprule
$\varpi_\beta$ & $\eroot{1&2&3&2&1\\&&2}$ & $(1,1,1,1,1,1)^\top$ \\
$\varpi_1$ & $\dfrac{1}{3}\left( \eroot{4&5&6&4&2\\&&3} \right)$ & $\frac{1}{3}(-1,2,2,2,2,2)^\top$ \\
$\varpi_2$ & $\dfrac{1}{3}\left( \eroot{5&10&12&8&4\\&&6} \right)$ & $\frac{1}{3}(1,1,4,4,4,4)^\top$ \\
$\varpi_3$ & $\eroot{2&4&6&4&2\\&&3}$ & $(1,1,1,2,2,2)^\top$ \\
$\varpi_4$ & $\dfrac{1}{3}\left( \eroot{4&8&12&10&5\\&&6} \right)$ & $\frac{1}{3}(2,2,2,2,5,5)^\top$ \\
$\varpi_5$ & $\dfrac{1}{3}\left( \eroot{2&4&6&5&4\\&&3} \right)$ & $\frac{1}{3}(1,1,1,1,1,4)^\top$ \\\bottomrule
\end{tabular}
\caption{Fundamental weights of $\rE_6$.}
\label{table:fundamental weights E6}
\end{table}
\begin{table}
\begin{tabular}{l@{\qquad}c@{\qquad}c} \toprule
$\varpi_\beta$ & $\dfrac{1}{2}\left( \eroot{4&8&12&9&6&3\\&&7} \right)$ & $\frac{1}{2}(3,3,3,3,3,3,3)^\top$ \\
$\varpi_1$ & $\eroot{2&3&4&3&2&1\\&&2}$ & $(0,1,1,1,1,1,1)^\top$ \\
$\varpi_2$ & $\eroot{3&6&8&6&4&2\\&&4}$ & $(1,1,2,2,2,2,2)^\top$ \\
$\varpi_3$ & $\eroot{4&8&12&9&6&3\\&&6}$ & $(2,2,2,3,3,3,3)^\top$ \\
$\varpi_4$ & $\dfrac{1}{2}\left( \eroot{6&12&18&15&10&5\\&&9} \right)$ & $\frac{1}{2}(3,3,3,3,5,5,5)^\top$ \\
$\varpi_5$ & $\eroot{2&4&6&5&4&2\\&&3}$ & $(1,1,1,1,1,2,2)^\top$ \\
$\varpi_6$ & $\dfrac{1}{2}\left( \eroot{2&4&6&5&4&3\\&&3} \right)$ & $\frac{1}{2}(1,1,1,1,1,1,3)^\top$ \\\bottomrule
\end{tabular}
\caption{Fundamental weights of $\rE_7$.}
\label{table:fundamental weights E7}
\end{table}
\begin{table}
\begin{tabular}{l@{\qquad}c@{\qquad}c} \toprule
$\varpi_\beta$ & $\eroot{5&10&15&12&9&6&3\\&&8}$ & $(3,3,3,3,3,3,3,3)^\top$ \\
$\varpi_1$ & $\eroot{4&7&10&8&6&4&2\\&&5}$ & $(1,2,2,2,2,2,2,2)^\top$ \\
$\varpi_2$ & $\eroot{7&14&20&16&12&8&4\\&&10}$ & $(3,3,4,4,4,4,4,4)^\top$ \\
$\varpi_3$ & $\eroot{10&20&30&24&18&12&6\\&&15}$ & $(5,5,5,6,6,6,6,6)^\top$ \\
$\varpi_4$ & $\eroot{8&16&24&20&15&10&5\\&&12}$ & $(4,4,4,4,5,5,5,5)^\top$ \\
$\varpi_5$ & $\eroot{6&12&18&15&12&8&4\\&&9}$ & $(3,3,3,3,3,4,4,4)^\top$ \\
$\varpi_6$ & $\eroot{4&8&12&10&8&6&3\\&&6}$ & $(2,2,2,2,2,2,3,3)^\top$ \\
$\varpi_7$ & $\eroot{2&4&6&5&4&3&2\\&&3}$ & $(1,1,1,1,1,1,1,2)^\top$ \\\bottomrule
\end{tabular}
\caption{Fundamental weights of $\rE_8$.}
\label{table:fundamental weights E8}
\end{table}

\subsection{Affine roots and roots coming from affine subsystems} \label{sec:affine roots}
Among the root systems of type $\rJ_{k,n}$ there are two affine type root systems, namely, for $(k,n)=(3,9)$ or $(4,8)$ (there is also $\rJ_{6,9}\cong \rJ_{3,9}$). In a simply-laced affine root system $\Phi$ roots come in families of the form $\alpha+m\delta$, where $\alpha$ is a root of the canonical finite type subsystem $\mathring\Phi$ and $\delta=\delta_{k,n}$ is the smallest element-wise positive vector such that $A\delta=0$ for $A$ the Cartan matrix of $\Phi$, and $m\in\mathbb{Z}$.

Denote by $J$ the $n{\times}n$-matrix consisting of $1$'s, so that the Gram matrix of the inner product with respect to the basis $e_1,\ldots,e_n$ equals $I-\frac{k-2}{k^2}J$. On the other hand, it must be equal to $C^{-\top}AC^{-1}$, so $A\delta=0$ implies $C^\top C\delta = \frac{k-2}{k^2}C^\top J C\delta$, which, in turn, means that $\delta'=C\delta$ satisfies $\delta'=\frac{k-2}{k^2}J\delta'$. If $\delta'=(x_1,\ldots,x_n)^\top$, then $x_1=\ldots=x_n=x$ and $x = \frac{k-2}{k^2}nx$. There are only three positive integer solutions to $n=\frac{k^2}{k-2}$, namely, $(k,n)=(3,9)$, $(4,8)$ or $(6,9)$. Recall that the element $x$ must also satisfy the restriction that $\sum x_i=nx$ is divisible by $k$. In cases $(k,n)=(3,9)$ and $(4,8)$ the minimal such $x$ equals $1$, and for $(k,n)=(6,9)$ one has $x=2$. Thus
\begin{align*}
& \delta'_{3,9} = (1,1,1,1,1,1,1,1,1)^\top,\\
& \delta'_{4,8} = (1,1,1,1,1,1,1,1)^\top,\\
& \delta'_{6,9} = (2,2,2,2,2,2,2,2,2)^\top,
\end{align*}
which correspond, respectively, to
\[ \delta_{k,n} = \eroot{2&4&6&5&4&3&2&1\\&&3}, \quad \eroot{1&2&3&4&3&2&1\\&&&2}, \quad \eroot{1&2&3&4&5&6&4&2\\&&&&&3}. \]
One can consider $\delta'_{k,n}$ as an element of a larger root system by means of \cref{remark:root-system-extension}, and below we will use such identifications implicitly. One also can obtain $\delta'_{6,9}$ from $\delta'_{3,9}$ by \cref{remark:dual-root-system}.

This means that if $k\geqslant3$ and $n-k\geqslant6$, there are the following families of roots in $\rJ_{k,n}$, coming from the natural $\rJ_{3,9}$ subsystem:
\begin{align*}
& \pm (e_i-e_j) + m\delta_{3,9} \quad \text{for} \quad k-3\leqslant j < i \leqslant k+5, \label{eq:root-type-E39-0}\tag{A0} \\
& \pm\ubr{1\ldots1}_{k} + m\delta_{3,9} = \ubr{3m\pm1\ldots3m\pm1}_{k-3}\ubr{m\pm1\ldots m\pm1}_{3}\ubr{m\ldots m}_{6}, \label{eq:root-type-E39-1}\tag{A1} \\
& \pm\ubr{2\ldots2}_{k-3}\ubr{1\ldots1}_{6} + m\delta_{3,9} = \ubr{3m\pm2\ldots3m\pm2}_{k-3}\ubr{m\pm1\ldots m\pm1}_{6}\ubr{m\ldots m}_{3}, \label{eq:root-type-E39-2}\tag{A2} \\
& \pm\ubr{3\ldots3}_{k-3}\ubr{2}_{1}\ubr{1\ldots1}_{7} + m\delta_{3,9} = \ubr{3m\pm3\ldots3m\pm3}_{k-3}\ubr{m\pm2}_{1}\ubr{m\pm1\ldots m\pm1}_{7}\ubr{m}_{1}. \label{eq:root-type-E39-3}\tag{A3}
\end{align*}
In \cref{table:real-roots} the orbits that contain the roots from one of the series (\ref{eq:root-type-E39-0}--\ref{eq:root-type-E39-3}) are marked by (Xi\textsuperscript{$\pm$}), with the sign corresponding to the choice of the signs in the formula.

Note that the roots of (\ref{eq:root-type-E39-1}\textsuperscript{$\pm$}) and (\ref{eq:root-type-E39-2}\textsuperscript{$\mp$}) 
families represent the same $W(\rA_{n-1})$-orbits, but with different numbering. Namely, the root of (\ref{eq:root-type-E39-1}) family with a given value of $m=m_1$ and $+$ as the sign and the root of (\ref{eq:root-type-E39-2}) family with $m=m_1+1$ and $-$ as the sign are obtained from one another by a permutation of the last $9$ non-zero entries. The root of the form (\ref{eq:root-type-E39-1}) with $m=m_1$ and $-$ as the sign is in the same $W(\rA_{n-1})$-orbit as the root of the form (\ref{eq:root-type-E39-2}) with $m=m_1-1$ and $+$ for the sign.

The same holds for (\ref{eq:root-type-E39-3}\textsuperscript{$+$}) and (\ref{eq:root-type-E39-3}\textsuperscript{$-$}).

Similarly if $k\geqslant6$ and $n-k\geqslant3$, there are roots coming from the natural $\rJ_{6,9}$ subsystem:
\begin{align*}
& \pm(e_i-e_j)+m\delta_{6,9} \quad \text{for} \quad k-5\leqslant j < i \leqslant k+3, \label{eq:root-type-E69-0}\tag{B0} \\
& \pm\ubr{1\ldots1}_{k} + m\delta_{6,9} = \ubr{3m\pm1\ldots3m\pm1}_{k-6}\ubr{2m\pm1\ldots 2m\pm1}_{6}\ubr{2m\ldots 2m}_{3}, \label{eq:root-type-E69-1}\tag{B1} \\
& \pm\ubr{2\ldots2}_{k-3}\ubr{1\ldots1}_{6} + m\delta_{6,9} = \ubr{3m\pm2\ldots3m\pm2}_{k-6}\ubr{2m\pm2\ldots 2m\pm2}_{3}\ubr{2m\pm1\ldots 2m\pm1}_{6}, \label{eq:root-type-E69-2}\tag{B2} \\
& \pm\ubr{3\ldots3}_{k-5}\ubr{2\ldots2}_{7}\ubr{1}_{1} + m\delta_{6,9} = \ubr{3m\pm3\ldots3m\pm3}_{k-6}\ubr{2m\pm3}_{1}\ubr{2m\pm2\ldots2m\pm2}_{7}\ubr{2m\pm1}_{1}. \label{eq:root-type-E69-3}\tag{B3}
\end{align*}
Again, the roots of (\ref{eq:root-type-E69-1}\textsuperscript{$\pm$} and \ref{eq:root-type-E69-2}\textsuperscript{$\mp$}) and (\ref{eq:root-type-E69-3}\textsuperscript{$+$} and \ref{eq:root-type-E69-3}\textsuperscript{$-$}) represent the same $W(\rA_{n-1})$-orbits, but with different numbering.

If $k,n-k\geqslant4$, there are roots coming from the natural $\rJ_{4,8}$ subsystem:
\begin{align*}
& \pm(e_i-e_j)+m\delta_{4,8} \quad \text{for} \quad k-4\leqslant j < i \leqslant k+3, \label{eq:root-type-E48-0}\tag{C0} \\
& \pm\ubr{1\ldots1}_{k} + m\delta_{4,8} = \ubr{2m\pm1\ldots2m\pm1}_{k-4}\ubr{m\pm1\ldots m\pm1}_{4}\ubr{m\ldots m}_{4}, \label{eq:root-type-E48-1}\tag{C1} \\
& \pm\ubr{2\ldots2}_{k-3}\ubr{1\ldots1}_{6} + m\delta_{4,8} = \ubr{2m\pm2\ldots2m\pm2}_{k-4}\ubr{m\pm2}_{1}\ubr{m\pm1\ldots m\pm1}_{6}\ubr{m}_{1}. \label{eq:root-type-E48-2}\tag{C2}
\end{align*}
Here $W(\rA_{n-1})$-orbits do not depend on the signs (after a suitable renumbering), and, moreover, the orbits of (\ref{eq:root-type-E48-0}) and (\ref{eq:root-type-E48-2}) cover the same set of roots.

\subsection{Almost real roots} \label{sec:almost real roots}
Every real root of degree $\geqslant1$, when expressed in basis $e_1,\ldots,e_n$, say, $x=(x_1,\ldots,x_n)^\top$, satisfies the following three properties by \cref{lemma:q is invariant,lemma:0<x<d is preserved}:
\begin{enumerate}
\item $x\in\mathbb{Z}\Delta$,
\item $0\leqslant x_1,\ldots,x_n\leqslant d$, where $d=(x_1+\ldots+x_n)/k$,
\item $q(x) = \sum x_i^2 + \frac{2-k}{k^2}(\sum x_i)^2 = 2$.
\end{enumerate}
However, there are vectors $x$ satisfying all of the above, which are not real roots. We call such elements of the root lattice \emph{almost real roots}. They exist in degrees $\geqslant4$. 

%\revise{ Is the following correct?
%\begin{theorem}
%For $k=1,2,3$ and $n \in \mathbb{Z}_{\ge k}$, there is no almost real root in $\rJ_{k,n}$.
%\end{theorem}
%\begin{proof}
%\end{proof}
%}

Almost real roots of degrees $4$ and $5$ (in $\rJ_{k,n}$ for large enough $k,n$) are listed in 
\cref{table:almost-real-roots}. 
Note that the statement of \cref{lemma:bound on the minimal support of a roots orbit} also 
holds for almost real roots.

\begin{table}[h]
\begin{tabular}{l@{\qquad}l} \toprule
degree $4$ & $\ubr{4\ldots4}_{k-4}\ubr{333}_{3}\ubr{1\ldots1}_{7}$ \quad $\ubr{4\ldots4}_{k-6}\ubr{3\ldots3}_{7}\ubr{111}_{3}$ \\ \midrule
degree $5$ &
$\ubr{5\ldots5}_{k-3}\ubr{33}_{2}\ubr{1\ldots1}_{9}$ \quad 
$\ubr{5\ldots5}_{k-4}\ubr{44}_{2}\ubr{3}_{1}\ubr{2}_{1}\ubr{1\ldots1}_{7}$ \quad 
$\ubr{5\ldots5}_{k-5}\ubr{4\ldots4}_{4}\ubr{22}_{2}\ubr{1\ldots1}_{5}$ \\ &
$\ubr{5\ldots5}_{k-6}\ubr{4\ldots4}_{5}\ubr{33}_{2}\ubr{1\ldots1}_{4}$ \quad
$\ubr{5\ldots5}_{k-7}\ubr{4\ldots4}_{7}\ubr{3}_{1}\ubr{2}_{1}\ubr{11}_{2}$ \quad
$\ubr{5\ldots5}_{k-8}\ubr{4\ldots4}_{9}\ubr{22}_{2}$ \\
\bottomrule
\end{tabular}
\caption{$W(\rA_{n-1})$-orbits of almost real roots of degrees $4$ and $5$.}
\label{table:almost-real-roots}
\end{table}

Calculating the minimal subsystems for each orbit ot almost real roots in \cref{table:almost-real-roots} we see that degree $4$ almost real roots are present in all root systems of type $\rJ_{k,n}$ which contain $\rJ_{4,10}$ or $\rJ_{6,10}$, and that every $\rJ_{k,n}$ root system containing $\rJ_{3,11}$ or $\rJ_{8,11}$ has an almost real root of degree $5$. This implies that in every non-finite, non-affine, non-hyperbolic root system of type $\rJ_{k,n}$ there are almost real roots.

Let $x$ be a positive almost real root, and assume that $x=\dec(x)$. Then repeating the operation $x\mapsto\dec(s_\beta(x))$ lowers the height and eventually leads to an element $x'=(x_1,\ldots,x_n)$ of degree $d$ such that either some of the entries $x_i$ are negative or greater than $d$. When translated to the root basis, this means that the coefficient of some simple root $\alpha_i$ is negative.
\begin{example}
For $(k,n)=(4,10)$ set $x=(3,3,3,1,1,1,1,1,1,1)^\top$, so that in the root basis
\[ x = \eroot{1&2&3&6&5&4&3&2&1\\&&&4}. \]
Then $\dec(s_\beta(x)) = (1,1,1,1,1,1,1,1,1,-1)^\top$, which it the roots basis equals
\[ \dec(s_\beta(x)) = \eroot{1&2&3&4&3&2&1&0&-1\\&&&2}. \]
Similarly, for $(k,n)=(6,10)$ and $x=(3,3,3,3,3,3,3,1,1,1)^\top$ in terms of the root basis one has
\[ x = \eroot{1&2&3&4&5&6&3&2&1\\&&&&&4}, \]
while $\dec(s_\beta(x)) = (3,1,1,1,1,1,1,1,1,1)^\top$, which translates into
\[ \dec(s_\beta(x)) = \eroot{-1&0&1&2&3&4&3&2&1\\&&&&&2}. \]
\end{example}

To find real roots, we consider the orbits under $W(\rA_{n-1})$ (\cref{remark:W-orbits}).
The numbers of $W(\rA_{n-1})$-orbits of real roots and almost real roots are listed in \cref{table:roots orbits count in Ekn}, and the total numbers of real roots and almost real roots are listed in \cref{table:real roots count,table:almost real roots count} respectively. 

\section{Comparison with Manin's hyperbolic construction} \label{sec:hyperbolic construction}
In \cite{Manin} Manin gave a construction of the root system $\rE_8$ inside a hyperbolic lattice. His construction works as follows.

Consider a $9$-dimensional space $V$ equipped with the inner product of signature $(1,8)$. This means that there exists an orthogonal basis $f_0,f_1,\ldots,f_8$ of $V$ such that $(f_0,f_0)=1$, $(f_i,f_i)=-1$ for $i\geqslant 1$. Set $\omega = -3f_0+f_1+\ldots+f_8$ and define the lattice $L=\mathbb{Z}f_0+\ldots+\mathbb{Z}f_8$. Then the set
\[ R = \{ f\in L \mid (f,\omega)=0,\ (f,f)=-2 \} \]
is the root system of type $\rE_8$ \cite[Proposition~25.2 and Theorem~25.4]{Manin}.

This realization is related to the structure of del Pezzo surfaces. If a del Pezzo surface $V$ of degree $d$ is not isomorphic to $\mathbb{P}^1\times\mathbb{P}^1$, then its Picard group $\operatorname{Pic}(V)$ is isomorphic to the odd unimodular lattice $L = I_{1,9-d}$, in which the root system is realised.

The complete enumeration of roots of $\rE_8$ is provided by \cite[Proposition~25.5.3]{Manin}. 
It states that if $(a,b_1,\ldots,b_8)$ are the coordinates of a root with respect to the basis 
$f_0,f_1,\ldots,f_8$, then these coordinates can be obtained from the rows of the following table by a permutation of the last $8$ entries $b_1,\ldots,b_8$ and, possibly, a simultaneous change of the sign for all $9$ entries :
\[ \begin{tabular}{>{$}c<{$}>{$}c<{$}>{$}c<{$}>{$}c<{$}>{$}c<{$}>{$}c<{$}>{$}c<{$}>{$}c<{$}>{$}c<{$}}
\toprule
a & b_1 & b_2 & b_3 & b_4 & b_5 & b_6 & b_7 & b_8 \\\midrule
0 & 1 & 0 & 0 & 0 & 0 & 0 & 0 & -1 \\
1 & 1 & 1 & 1 & 0 & 0 & 0 & 0 & 0 \\
2 & 1 & 1 & 1 & 1 & 1 & 1 & 0 & 0 \\
3 & 2 & 1 & 1 & 1 & 1 & 1 & 1 & 1 \\
\bottomrule
\end{tabular}
\]

Comparing this with the content of \cref{table:real-roots} reveals that this construction coincides with our presentation of $\rE_8$-roots insise $\rJ_{4,9}$. The correspondence, from presentation in 
the $x_i$ to presentation in the $f_i$, is as follows 
\[ x=(x_1,\ldots,x_8) \rightsquigarrow (\deg(x),x_1,\ldots,x_8), \]
(extending the presentations from \cref{table:real-roots} by $0$s at the end where needed). 
For degree $0$, see \cref{remark:degree 0 roots}.
This is exactly the inclusion $\rJ_{3,8}\hookrightarrow \rJ_{4,9}$ described in \cref{remark:root-system-extension}.

Moreover, the exceptional curves on $V$ are parametrised by the following elements of $\operatorname{Pic}(V)$ (with respect to $f_0,f_1,\ldots,f_n)$:
\[
\begin{tabular}{>{$}c<{$}>{$}c<{$}>{$}c<{$}>{$}c<{$}>{$}c<{$}>{$}c<{$}>{$}c<{$}>{$}c<{$}>{$}c<{$}}
\toprule
a & b_1 & b_2 & b_3 & b_4 & b_5 & b_6 & b_7 & b_8 \\\midrule
0 & -1 & 0 & 0 & 0 & 0 & 0 & 0 & 0 \\
1 & 1 & 1 & 0 & 0 & 0 & 0 & 0 & 0 \\
2 & 1 & 1 & 1 & 1 & 1 & 0 & 0 & 0 \\
3 & 2 & 1 & 1 & 1 & 1 & 1 & 1 & 0 \\
4 & 2 & 2 & 2 & 1 & 1 & 1 & 1 & 1 \\
5 & 2 & 2 & 2 & 2 & 2 & 2 & 1 & 1 \\
6 & 3 & 2 & 2 & 2 & 2 & 2 & 2 & 2 \\
\bottomrule
\end{tabular}
\]
together with all obtained by permuting $b_1,\ldots,b_8$. The calculation of these elements in \cite[Proposition~26.1]{Manin} is done by introducing an auxiliary parameter $b_9$ and then specifying it to $1$. Note, however, that for such values of $b_i$ the vector $(a,b_1,\ldots,b_8,1)$ coincides with one of the roots of $\rJ_{4,10}$ coming from the affine subsystem $\rJ_{3,9}$, given by formulas~(\ref{eq:root-type-E39-0})--(\ref{eq:root-type-E39-3}) with $m=1$. Namely, the image $\delta$ of $\delta_{3,9}$ in $\rJ_{4,10}$ is $(3,1,1,1,1,1,1,1,1,1,1)^\top$, so
\[
\begin{tabular}{r<{$\colon$}@{\qquad}>{$}l<{$}}
\text{(\ref{eq:root-type-E39-3}\textsuperscript{$-$})} & -\ubr{3}_{1}\ubr{2}_1\ubr{1\ldots 1}_{7} + \delta = (0,-1,0,0,0,0,0,0,0,1)^\top, \\
\text{(\ref{eq:root-type-E39-2}\textsuperscript{$-$})} & -\ubr{2}_{1}\ubr{1\ldots1}_{6} + \delta = (1,0,0,0,0,0,0,1,1,1)^\top, \\
\text{(\ref{eq:root-type-E39-1}\textsuperscript{$-$})} & -\ubr{1\ldots 1}_{4} + \delta = (2,0,0,0,1,1,1,1,1,1)^\top, \\
\text{(\ref{eq:root-type-E39-0})} & \vphantom{\ubr{1}_{1}} e_2-e_8 + \delta = (3,2,1,1,1,1,1,1,0,1)^\top, \\
\text{(\ref{eq:root-type-E39-1}\textsuperscript{$+$})} & \ubr{1\ldots 1}_{4} + \delta = (4,2,2,2,1,1,1,1,1,1)^\top, \\
\text{(\ref{eq:root-type-E39-2}\textsuperscript{$+$})} & \ubr{2}_{1}\ubr{1\ldots1}_{6} + \delta = (5,2,2,2,2,2,2,1,1,1)^\top, \\
\text{(\ref{eq:root-type-E39-3}\textsuperscript{$+$})} & \ubr{3}_{1}\ubr{2}_1\ubr{1\ldots 1}_{7} + \delta = (6,3,2,2,2,2,2,2,2,1)^\top.
\end{tabular}
\]

\section{Connection with cluster algebras} \label{sec:cluster algebras}

Jensen, King and Su \cite{JKS} have given an additive categorification of the cluster algebra structure 
on the coordinate ring $\mathbb{C}[\Gr_{k,n}]$ of the Grassmannian of $k$-subspaces in $n$-space, 
by considering the category $\mathrm{CM}(B_{k,n})$ of Cohen-Macaulay modules over a 
quotient $B_{k,n}$ of the preprojective algebra of type $\widetilde{\rA}$.

Jensen, King and Su pointed out in \cite[Section 8]{JKS} that in the finite type cases,  indecomposable modules corresponds to real roots in the associated root system and that the number of indecomposable rank $d$ modules is $d$ times the number of real roots of degree $d$. 
They observe that this evidence suggests that rigid indecomposable modules correspond to roots (as classes in the Grothendieck group) 
and that for every real root of degree $d$ there are $d$ rigid indecomposable objects of rank $d$. They showed that the Grothendieck group of ${\rm CM}(B_{k,n})$ can be identified with the root lattice $\Lambda(\rJ_{k,n})$ and with the sublattice $\mathbb{Z}^n(k) \subset \mathbb{Z}^n$ spanned by the  $GL_n(\mathbb{C})$ weights of the homogeneous functions in $\CC[\Gr(k,n)]$. Thus their ``root conjecture'' means that the weights of cluster variables are roots of $\rJ_{k,n}$. 

%King conjectured that the number of rigid indecomposable rank $d$ modules in $\mathrm{CM}(B_{k,n})$ 
%is $d$ times the number of real roots of degree $d$ in $\rJ_{k,n}$, this is true in the finite types. 
%Furthermore, Jensen, King and Su conjectured that 
%every rigid indecomposable module in $\mathrm{CM}(B_{k,n})$ is associated to a (real or imaginary) 
%root in $\rJ_{k,n}$.

Every $B_{k,n}$-module of rank $1$ can be characterized by a $k$-element subset of $\{1,\ldots,n\}$, see \cite[Definition~5.1 and Proposition~5.2]{JKS}. 
These in turn correspond to real roots in degree $1$.

The rank $1$ modules can be viewed as building blocks for the category as every module in 
${\rm CM}(B_{k,n})$ has a filtration with factors which are rank $1$ modules, as pointed out in a private communication by 
A.~King and M.~Pressland. 
If $M$ is an arbitrary module in ${\rm CM}(B_{k,n})$, 
one can consider homormophisms $L_I\hookrightarrow M$ such that the quotient $M/L_I$ is also in ${\rm CM}(B_{k,n})$. Such 
homomorphisms always exist and allow to reduce the rank of $M$. 
Such a filtration is not unique in general.
Let $M$ be a rank $n$ module in ${\rm CM}(B_{k,n})$ with factors $L_{I_1},\dots, L_{I_d}$ in its filtration, where $L_{I_d}$ is a submodule of $M$. We write
\[ P_M=\thfrac{I_1}{\stackrel{\vdots {\phantom{a}}}{\phantom{.} }}{I_d} \quad \text{or} \quad P_M=I_1| \cdots |I_d, \]
and $P_M$ is called the \emph{profile} of $M$. The number $d$ is called the {\sl rank} of the module $M$. 

For every module $M$ with a profile $P_M$ of $d$ rows, 
one associates the element $\varphi(M)=\varphi(P_M):= (x_1,\ldots,x_n)^\top$ in $\mathbb{Z}\Delta$ 
where $x_i$ is the number of occurrences of $i$ in the profile of $M$. 

Indeed, since each of these $d$ rows has size $k$, the total number of entries is $x_1+\ldots+x_n=kd$. We have $0\leqslant x_i\leqslant d$ for each $i \in \{1,\ldots, n\}$. 

Conversely, for any element $x=(x_1,\ldots,x_n)^\top\in\mathbb{Z}\Delta$ with 
$0\leqslant x_i\leqslant d$ for all $i$, one can construct a profile mapping to $x$. To do this, 
take the sequence 
\[ 
a = \big(\underbrace{1,\ldots,1}_{x_1},\underbrace{2,\ldots,2}_{x_2},\ldots,\underbrace{n,\ldots,n}_{x_n}\big)
\] 
of length $kd$ and set
\[ I_i = \{a_{k-i+1},a_{k-i+1+d},\ldots,a_{k-i+1+(k-1)d}\}. \]
Then define $P_x:=I_1|\cdots|I_d$. This is a profile with $\varphi(P_x)=x$.
For example, for the root $x=(2,1,1,1,1,1,1,1)$ in $\rJ_{3,8}$, this produces 
\[ P_x = \thfrac{258}{147}{136}. \] 

Now given a profile $P$ with $d$ rows, we order the entries increasingly in each row and write this as 
$P=(P_{ij})$, $1\le i\le d$, $1\le j\le k$. So $(P_{ij})_j$ is the $i$th row of the profile and 
$P_{ij} < P_{ij'}$ for $j<j'$. The profile $P$ is called {\sl weakly column decreasing} if 
for every $j\in[k]$ and for every $i\in[d-1]$, we have $P_{i,j} \ge P_{i+1,j}$. If $P$ is weakly column 
decreasing and in addition, 
we have $P_{d,j} \ge P_{1, j-1}$ for all $j\in [2,k]$, we say that $P$ is \emph{canonical}.

The profile $P_x$ corresponding to $x \in \mathbb{Z}\Delta$ constructed above is a canonical profile. In \cite[Theorem 5.7]{BBGL}, it is shown that the profile of any rigid indecomposable module of rank $3$ such that $\varphi(M)$ is a real root is a cyclic permutation of a canonical profile. For example, the profile
\[ P=\thfrac{258}{147}{136} \]
is a canonical profile of rank $3$ and $\varphi(P)$ is a real root in $\rJ_{3,8}$. The cyclic permutations of $P$ are
\[ \thfrac{258}{147}{136},\quad \thfrac{147}{136}{258}\quad \text{and}\quad \thfrac{136}{258}{147}. \]
The modules with these profiles are all rigid indecomposable.
We note that it is conjectured that whenever $M$ in ${\rm CM}(B_{k,n})$ is rigid indecomposable 
and $\varphi(M)$ is a real root in $\rJ_{k,n}$, then the profile $P_M$ is a cyclic permutation of a canonical profile, \cite[Conjecture~5.8]{BBGL}.

The results about real roots in $\rJ_{k,n}$ in this paper are thus expected to help with the 
characterization of rigid indecomposable modules in ${\rm CM}(B_{k,n})$ 
corresponding to real roots.

\begin{table}\footnotesize
\begin{tabular}{l@{\qquad}l@{\qquad}*{12}{c}}\toprule
& \qquad degree & 1 & 2 & 3 & 4 & 5 & 6 & 7 & 8 & 9 & 10 & 11 \\\midrule
\multirow{2}{*}{$(k,n)=(\infty,\infty)$} & real roots & 1 & 1 & 3 & 8 & 17 & 37 & 72 & 139 & 253 & 439 & 722 \\
& almost r. r. & 0 & 0 & 0 & 2 & 6 & 20 & 65 & 153 & 390 & 878 & 1888 \\\midrule
\multirow{2}{*}{$(k,n)=(3,\infty)$}& real roots & 1 & 1 & 1 & 2 & 3 & 5 & 7 & 13 & 17 & 28 & 37 \\
& almost r. r. & 0 & 0 & 0 & 0 & 1 & 1 & 4 & 7 & 16 & 27 & 52 \\\midrule
\multirow{2}{*}{$(k,n)=(4,\infty)$} & real roots & 1 & 1 & 2 & 4 & 8 & 15 & 26 & 44 & 76 & 115 & 183 \\
& almost r. r. & 0 & 0 & 0 & 1 & 2 & 5 & 15 & 31 & 64 & 131 & 250 \\\midrule
\multirow{2}{*}{$(k,n)=(5,\infty)$} & real roots & 1 & 1 & 3 & 6 & 11 & 24 & 45 & 81 & 143 & 236 & 372 \\
& almost r. r. & 0 & 0 & 0 & 1 & 3 & 9 & 26 & 53 & 133 & 266 & 529 \\\midrule
\multirow{2}{*}{$(k,n)=(3,10)$} & real roots & 1 & 1 & 1 & 2 & 2 & 2 & 3 & 5 & 5 & 7 & 9 \\
&almost r. r. & 0 & 0 & 0 & 0 & 0 & 0 & 0 & 0 & 0 & 0 & 0 \\\midrule
\multirow{2}{*}{$(k,n)=(3,11)$} & real roots & 1 & 1 & 1 & 2 & 2 & 4 & 4 & 8 & 10 & 14 & 18 \\
& almost r. r. & 0 & 0 & 0 & 0 & 1 & 0 & 1 & 0 & 2 & 1 & 3 \\\midrule
\multirow{2}{*}{$(k,n)=(3,12)$} & real roots & 1 & 1 & 1 & 2 & 3 & 4 & 6 & 10 & 13 & 20 & 27 \\
& almost r. r. & 0 & 0 & 0 & 0 & 1 & 1 & 2 & 2 & 5 & 5 & 9 \\\midrule
\multirow{2}{*}{$(k,n)=(4,9)$} & real roots & 1 & 1 & 2 & 2 & 3 & 5 & 7 & 9 & 14 & 17 & 22 \\
& almost r. r. & 0 & 0 & 0 & 0 & 0 & 0 & 0 & 0 & 0 & 0 & 0 \\\midrule
\multirow{2}{*}{$(k,n)=(4,10)$} & real roots & 1 & 1 & 2 & 3 & 6 & 8 & 15 & 20 & 34 & 44 & 70 \\
& almost r. r. & 0 & 0 & 0 & 1 & 0 & 1 & 1 & 3 & 1 & 8 & 4 \\\midrule
\multirow{2}{*}{$(k,n)=(4,11)$} & real roots & 1 & 1 & 2 & 4 & 7 & 12 & 20 & 31 & 52 & 74 & 117 \\
& almost r. r. & 0 & 0 & 0 & 1 & 1 & 2 & 4 & 8 & 10 & 24 & 32 \\\midrule
\multirow{2}{*}{$(k,n)=(5,10)$} & real roots & 1 & 1 & 3 & 4 & 6 & 12 & 21 & 31 & 52 & 76 & 110 \\
& almost r. r. & 0 & 0 & 0 & 0 & 0 & 0 & 0 & 0 & 2 & 2 & 2 \\
\bottomrule
\end{tabular}
\caption{Number of $W(\rA_{n-1})$-orbits of real roots and almost real roots in $\rJ_{k,n}$.}
\label{table:roots orbits count in Ekn}
\end{table}

\begin{table}\footnotesize
\begin{tabular}{l@{\qquad}l@{\qquad}*{7}{c}}\toprule
&& \multicolumn{7}{c}{$d$} \\\cmidrule{3-9}
$k$ & $n$ & 1 & 2 & 3 & 4 & 5 & 6 & 7 \\\midrule
\multirow{10}{*}{3}
& 6 & 20 & 1 & 0 & 0 & 0 & 0 & 0\\
& 7 & 35 & 7 & 0 & 0 & 0 & 0 & 0\\
& 8 & 56 & 28 & 8 & 0 & 0 & 0 & 0\\
& 9 & 84 & 84 & 72 & 84 & 84 & 72 & 84\\
& 10 & 120 & 210 & 360 & 850 & 1680 & 3870 & 7560\\
& 11 & 165 & 462 & 1320 & 4730 & 13860 & 42240 & 106260\\
& 12 & 220 & 924 & 3960 & 19140 & 73932 & 267300 & 802164\\
& 13 & 286 & 1716 & 10296 & 62920 & 300456 & 1235520 & 4241952\\
& 14 & 364 & 3003 & 24024 & 178178 & 1010100 & 4618628 & 17669652\\
& 15 & 455 & 5005 & 51480 & 450450 & 2948400 & 14774970 & 61861800\\\hline
\multirow{8}{*}{4}
& 8 & 70 & 56 & 70 & 56 & 70 & 56 & 70\\
& 9 & 126 & 252 & 702 & 1764 & 4914 & 9828 & 24390\\
& 10 & 210 & 840 & 3870 & 15960 & 55020 & 159480 & 419460\\
& 11 & 330 & 2310 & 15510 & 87890 & 355740 & 1276110 & 3626040\\
& 12 & 495 & 5544 & 50490 & 361680 & 1683990 & 6965640 & 21521610\\
& 13 & 715 & 12012 & 141570 & 1221792 & 6456606 & 29673072 & 99664422\\
& 14 & 1001 & 24024 & 354354 & 3571568 & 21191352 & 105921816 & 385453068\\
& 15 & 1365 & 45045 & 810810 & 9339330 & 61637940 & 330720600 & 1297836540\\\hline
\multirow{6}{*}{5}
& 10 & 252 & 1260 & 7020 & 30492 & 117180 & 330120 & 950220\\
& 11 & 462 & 4620 & 39930 & 243012 & 1113420 & 3903240 & 12134760\\
& 12 & 792 & 13860 & 166320 & 1292412 & 6763680 & 27642780 & 92038320\\
& 13 & 1287 & 36036 & 563706 & 5305872 & 31081050 & 142573860 & 506859210\\
& 14 & 2002 & 84084 & 1645644 & 18138120 & 117466440 & 590545956 & 2235937704\\
& 15 & 3003 & 180180 & 4285710 & 54029976 & 383439420 & 2079637560 & 8363775420\\\hline
\multirow{4}{*}{6}
& 12 & 924 & 18480 & 239580 & 1899744 & 10308144 & 41888880 & 143037840\\
& 13 & 1716 & 60060 & 1055340 & 10249096 & 63075012 & 288004860 & 1057150380\\
& 14 & 3003 & 168168 & 3777774 & 43259216 & 295387092 & 1482785304 & 5793796008\\
& 15 & 5005 & 420420 & 11621610 & 152912760 & 1143127440 & 6211345140 & 25687061400\\\hline
\multirow{2}{*}{7}
& 14 & 3432 & 210210 & 4924920 & 57028972 & 396203808 & 1987088532 & 7851283440\\
& 15 & 6435 & 630630 & 18648630 & 249909660 & 1917115200 & 10417968990 & 43770406680\\
\bottomrule
\end{tabular}
\caption{Number of real roots of a given degree in $\rJ_{k,n}$.}
\label{table:real roots count}
\end{table}

\begin{table}\footnotesize
\begin{tabular}{l@{\qquad}l@{\qquad}*{4}{c}}\toprule
&& \multicolumn{4}{c}{$d$} \\\cmidrule{3-6}
$k$ & $n$ & 4 & 5 & 6 & 7 \\\midrule
\multirow{6}{*}{3}
& 10 & 0 & 0 & 0 & 0\\
& 11 & 0 & 55 & 0 & 462\\
& 12 & 0 & 660 & 1320 & 13464\\
& 13 & 0 & 4290 & 17160 & 148434\\
& 14 & 0 & 20020 & 120120 & 1021020\\
& 15 & 0 & 75075 & 600600 & 5225220\\\hline
\multirow{7}{*}{4}
& 9 & 0 & 0 & 0 & 0\\
& 10 & 120 & 0 & 1260 & 840\\
& 11 & 1320 & 3960 & 41580 & 138600\\
& 12 & 7920 & 48180 & 445500 & 1953864\\
& 13 & 34320 & 317460 & 2925780 & 15038452\\
& 14 & 120120 & 1501500 & 14294280 & 82496414\\
& 15 & 360360 & 5705700 & 56936880 & 360751755\\\hline
\multirow{6}{*}{5}
& 10 & 0 & 0 & 0 & 0\\
& 11 & 1320 & 6930 & 62832 & 274890\\
& 12 & 15840 & 130680 & 1197504 & 5959800\\
& 13 & 102960 & 1162590 & 11052756 & 60911136\\
& 14 & 480480 & 6906900 & 68757689 & 412876464\\
& 15 & 1801800 & 31531500 & 329924595 & 2135223090\\\hline
\multirow{4}{*}{6}
& 12 & 15840 & 166320 & 1507968 & 8149680\\
& 13 & 137280 & 1930500 & 18666648 & 110630520\\
& 14 & 840840 & 14434420 & 148420272 & 943518576\\
& 15 & 3963960 & 80029950 & 871065195 & 5889723840\\\hline
\multirow{2}{*}{7}
& 14 & 960960 & 18018000 & 187675488 & 1224431208\\
& 15 & 5405400 & 121696575 & 1356755400 & 9474134670\\
\end{tabular}
\caption{Number of almost real roots of a given degree in $\rJ_{k,n}$.}
\label{table:almost real roots count}
\end{table}

\clearpage

%\bibliographystyle{amsalpha}
%\bibliography{real_roots_in_the_root_system_ekn.bib}

\begin{thebibliography}{9999}

\bibitem[BFZ]{BFZ} A. Berenstein, S. Fomin, and A. Zelevinsky, \textit{Cluster algebras III: Upper bounds and double Bruhat cells}, Duke Math. J. \textbf{126} (2005), no. 1, 1--52.

\bibitem[Bou1]{Bou1} N. Bourbaki, \textit{Lie groups and Lie algebras. Chapters 4--6}, Translated from the 1968 French original by Andrew Pressley. Elements of Mathematics (Berlin). Springer-Verlag, Berlin, 2002.

\bibitem[Bou2]{Bou2} N. Bourbaki, \textit{Lie groups and Lie algebras. Chapters 7--9}, Translated from the 1975 and 1982 French originals by Andrew Pressley. Elements of Mathematics (Berlin). Springer-Verlag, Berlin, 2005.

\bibitem[BBG]{BBG} K. Baur, D. Bogdanic, A. Garcia Elsener, \textit{Cluster categories from Grassmannians and root combinatorics}, Nagoya Mathematical Journal, \textbf{240}, 322--354. doi:10.1017/nmj.2019.14.

\bibitem[BBGL]{BBGL} K. Baur, D. Bogdanic, A. Garcia Elsener, and J.-R. Li, \textit{Indecomposable modules in Grassmannian cluster categories},  arXiv:2011.09227.

\bibitem[Coble]{Coble} A. Coble, \textit{Algebraic geometry and theta functions} (reprint of the 1929 edition), A. M. S. Coll. Publ., v. 10. A. M. S., Providence, RI, 1982.

\bibitem[DO]{DO} I. Dolgachev and D. Ortland, \textit{Point sets in projective spaces and theta functions}, Ast\'{e}risque No. 165 (1988), 210 pp. (1989).

\bibitem[FZ1]{FZ1} S. Fomin and A. Zelevinsky, \textit{Cluster algebras I: Foundations}, J. Amer. Math. Soc. \textbf{15} (2002), 497--529.

\bibitem[FZ2]{FZ2} S. Fomin and A. Zelevinsky, \textit{Cluster algebras II: Finite type classification}, Invent. Math. \textbf{154} (2003), 63--121.

\bibitem[FZ3]{FZ3} S. Fomin and A. Zelevinsky, \textit{Cluster algebras IV: Coefficients}, Compositio Math. 143 (2007), 112--164.

\bibitem[GLS]{GLS} C. Gei\ss, B. Leclerc, and J. Schr\"{o}er, \textit{Partial flag varieties and preprojective algebras}, Annales de l'institut Fourier \textbf{58} (2008), issue 3, 825--876.

\bibitem[JKS]{JKS}
B. Jensen, A. King, X.P. Su, \textit{A categorification of Grassmannian cluster algebras},
Proc. Lond. Math. Soc. (3) \textbf{113} (2016), no. 2, 185--212. 

\bibitem[Kac]{Kac} V. G. Kac, \textit{Infinite-dimensional Lie algebras}, Third edition, Cambridge University Press, Cambridge, 1990.

\bibitem[Manin]{Manin} Y. I. Manin, \textit{Cubic forms. Algebra, geometry, arithmetic}, Translated from the Russian by M. Hazewinkel. Second edition. North-Holland Mathematical Library, \textbf{4}. North-Holland Publishing Co., Amsterdam, 1986.

\bibitem[Moody]{Moody} R. V. Moody, \textit{Root systems of hyperbolic type},
Adv. in Math. \textbf{33} (1979), no. 2, 144--160. 

\bibitem[PSV]{PSV} E. Plotkin, A. Semenov, and N. Vavilov,
\textit{Visual basic representations: an atlas},
Internat. J. Algebra Comput. \textbf{8} (1998), no. 1, 61--95.
 
\bibitem[Scott]{Sco06} J. Scott, {\em Grassmannians and Cluster Algebras}, 
Proc. Lond. Math. Soc. (2) {\bf 92} (2006), 345--380. 

\bibitem[Wilson]{Wilson}R. Wilson, \textit{The finite simple groups}. – Springer Science \& Business Media, 2009.
  
\end{thebibliography}

\end{document}